\theoremstyle{plain}
\newtheorem{theorem}{Theorem}
\newtheorem{corollary}[theorem]{Corollary}
\newtheorem{lemma}[theorem]{Lemma}
\theoremstyle{definition}
\newtheorem{example}[theorem]{Example}
\theoremstyle{remark}
\begin{document}
    
    \author{Hieu D. Nguyen \and Long G. Cheong}
    \address{Department of Mathematics, Rowan University, Glassboro, NJ 08028, USA.}
    \email{nguyen@rowan.edu, cheong94@students.rowan.edu}
    \title{New Convolution Identities for \\ Hypergeometric Bernoulli Polynomials}
    \date{11-14-2013}
    
    \subjclass[2000]{Primary 11B68}
    \keywords{hypergeometric Bernoulli polynomial, Appell sequence, convolution, sums of products}
    \begin{abstract}
        New convolution identities of hypergeometric Bernoulli polynomials are presented.  Two different approaches to proving these identities are discussed, corresponding to the two equivalent definitions of hypergeometric Bernoulli polynomials as Appell sequences.
        
    \end{abstract} 
    
    \maketitle

    \section{Introduction}
    It is well known that the Euler-Maclaurin Summation (EMS) formula given by
    \begin{equation}
    \sum\limits_{k = 0}^n f(k) = \int_0^n f(x)dx + \frac{1}{2} [f(n)+f(0)] + \sum\limits_{k = 2}^\infty \frac{B_{k}}{k!} \left[f^{(k - 1)}(n) - f^{(k - 1)}(0) \right]
    \label{eqn:euler-maclaurin_summation_formula}
    \end{equation}
is extremely useful for approximating sums and integrals and for deriving special formulas.  Here, $B_n$ are the Bernoulli numbers defined by the exponential generating function
$$
\frac{t}{e^t-1}=\sum_{n=0}^{\infty} B_n \frac{t^n}{n!}
$$
For example, if we set $f(x)=x^p$ in \eqref{eqn:euler-maclaurin_summation_formula} and use the fact that $B_0=1$ and $B_1=-1/2$, then we obtain the classical sums of powers formula first discovered by Jacob Bernoulli:
$$
    \sum\limits_{k = 1}^n {k^p} = n^p + \sum\limits_{k = 0}^p \frac{p!}{k!(p - k + 1)!} B_k n^{p + 1 - k}
$$
Consider next the special case of the EMS formula where $n=1$, which we shall write in the form
     \begin{equation}
    \int_0^1 f(x)dx = \frac{1}{2} [f(1)+f(0)] - \sum\limits_{k = 2}^\infty \frac{B_{k}}{k!} \left[f^{(k - 1)}(1) - f^{(k - 1)}(0) \right]
    \label{eq:em-special-case}
    \end{equation}
If we again set $f(x)=x^n$ in \eqref{eq:em-special-case}, then we obtain the classic Bernoulli number identity first discovered by Euler:
$$
    \sum\limits_{k = 0}^{n} \binom{n + 1}{k} B_k = 0
$$
    
    It is natural to ask if other Bernoulli number identities can be obtained by substitution.  For example, is there a function $f(x)$ which when substituted into \eqref{eq:em-special-case} will yield the following quadratic identity?
    \begin{equation}\label{eq:euler}
    \sum\limits_{k = 0}^{n + 1} \binom{n + 1}{k} B_k B_{n - k + 1} = -(n + 1)B_n - n B_{n + 1}
\end{equation}
    The answer, not surprisingly, is yes.  The surprise however is the choice for $f(x)$.  It is clear that $f(x)$ should involve the Bernoulli numbers since \eqref{eq:euler} contains products of Bernoulli numbers.  Therefore, a natural choice for $f(x)$ would be to set it equal to a Bernoulli polynomial, say $B_n(x)$.   However, the reader will discover that substituting $f(x)=B_n(x)$ into  \eqref{eq:em-special-case} yields the trivial identity.  The correct answer is $f(x)=(1-x)B_n(x)$.
    
    The Bernoulli polynomials $B_n(x)$ give an example of an Appell sequence.  As such, there are two equivalent definitions for $B_n(x)$: one via the exponential generating function
    \begin{equation}
    \label{eq:egf}
    \frac{te^{xt}}{e^t-1}=\sum_{n=0}^{\infty}B_n(x)\frac{t^n}{n!}
    \end{equation}
and the other as a polynomial sequence with the following properties:
    
        \begin{equation}
    \begin{aligned}
    B_0(x) &= 1,\\
    {B_n}'(x) &= n B_{n - 1}(x),\\
    \int_0^1 B_n(x)\,dx &= \delta_n \equiv \begin{cases}
    1 & \text{if } n = 0\\
    0 & \text{if } n \ne 0
    \end{cases}
    \end{aligned}
    \label{eq:appell-sequence}
    \end{equation}
where $\delta_n$ is the Kronecker delta function.  In either approach the Bernoulli numbers can be computed as the evaluation $B_n = B_n(0)$.   Beginning with the second approach, we shall demonstrate that this technique of substitution to obtain Bernoulli number identities can be extended to Appell sequences by generalizing the EMS formula.   This was achieved by recognizing that \eqref{eq:em-special-case}  can be derived from the following repeated integration by parts formula
    
    \begin{equation}
    \label{eq:integrationbyparts}
    \int{f(x)g(x)dx} = \sum_{k=0}^p (-1)^k f^{(k)}(x)g^{-(k+1)}(x) +(-1)^{-(p+1)}\int{f^{(p+1)}(x)g^{-(p+1)}(x)dx}
    \end{equation}
where $f^{(k)}(x)$ denotes the $k$-th derivative of $f(x)$ and $g^{-(k)}(x)$ denotes the $k$-th integral (or anti-derivative) of $g(x)$.  In particular, if we set $g(x)=B_0(x)$ in \eqref{eq:integrationbyparts}, then $g^{-(k)}(x)= B_k/k!$ because of the derivative property in \eqref{eq:appell-sequence}.  Then integrating over the interval $[0,1]$ and using the fact that $B_1=-1/2$, $B_{2k+1}=0$ and $B_k(1)=(-1)^k B_k(0)$ for $k\geq 1$, we find that  \eqref{eq:integrationbyparts} reduces to \eqref{eq:em-special-case} in the limit where $p \rightarrow \infty$.
    
    Let $q_n(x)$ be an Appell sequence, i.e. $q_0(x)=1$ and $q_n'(x)=nq_{n-1}'(x)$.  Then setting $g(x)=q_0(x)$ in \eqref{eq:integrationbyparts} yields the following generalized EMS formula:
\begin{equation}
\label{eq:ems-appell}
    \int{f(x)dx} = \sum_{k=0}^p (-1)^k f^{(k)}(x)g^{-(k+1)}(x) +(-1)^{-(p+1)}\int{f^{(p+1)}(x)g^{-(p+1)}(x)dx}
\end{equation}
More interestingly, if we also set $f(x)=p_n(x)$ to be another Appell sequence in \eqref{eq:ems-appell}, then we obtain the following convolution identity (Theorem 1) for any pair of Appell sequences $p_n(x)$ and $q_n(x)$:
\begin{equation}
\label{eq:convolution-appell-intro}
\sum_{k=0}^n (-1)^k \binom{n}{k} p_{n-k}(x)q_{k}(x) = c_n
\end{equation}
 where $\{c_n\}$ are constants independent of $x$.

Equation \eqref{eq:convolution-appell-intro} is our starting point for deriving new identities.  We shall prove \eqref{eq:convolution-appell-intro} in the next section and apply it to a class of generalized Bernoulli polynomials known as hypergeometric Bernoulli polynomials, defined by equation \eqref{eq:egf-hyper} (see also \cite{hn}), in order to obtain new convolution identities.  For example, the following sums of products formula holds for any two hypergeometric Bernoulli numbers $B_{M,n}$ and $B_{N,n}$ of order $M$ and
$N$, respectively, which we prove in Theorem \ref{th:hypergeometric-bernoulli}:

    \begin{eqnarray}
\lefteqn{\sum_{\substack {0\leq i \leq M-1 \\ 0 \leq j \leq N-1 \\(i,j)\neq (0,0)}} \left[\sum_{k=0}^n (-1)^k \binom{n}{j;i;k-i} B_{N,n-k-j} B_{M,k-i}\right]= }  \notag  \\  &&   (-1)^{M-1} \left[\binom{M+N}{M}\delta_{n-M-N}+\sum_{j=0}^{N-1}  \binom{n}{M;j}B_{N,n-M-j} +(-1)^{n-M-N}\sum_{i=0}^{M-1}  \binom{n}{N; i}B_{M,n-N-i}\right] \ \label{eq:main-result-1}
    \end{eqnarray}
where $\binom{a}{b; c}$ denotes  the multinomial coefficient defined by 
$$
\binom{a}{b; c} = \frac{a!}{b!c!(a-b-c)!}
$$
In the special case where $M=N=2$, we obtain the identities
$$
        \sum_{k=0}^{n} (-1)^k\binom{n}{k}B_{2,n-k}(x)B_{2,k}(x) =\frac{1}{2}\delta_{n-2}+nB_{2,n-1}+B_{2,n}
$$
and
$$
    \sum_{k=0}^{n} \binom{2n}{2k}B_{2,2n-2k}B_{2,2k} =\frac{1}{4}[\delta_{2n-2}+2nB_{2,2n-1}-(2n-4)B_{2,2n}]
$$
which we prove in Corollaries \ref{cor:N=M=2} and \ref{cor:N=M=2-even}, respectively.  The latter formula generalizes Euler's quadratic formula for the classical Bernoulli numbers:
    \[\sum\limits_{k = 0}^n \binom{2n}{2k} B_{2k} B_{2n - 2k} = -(2n - 1) B_{2n}\]

Lastly, in section 3 we demonstrate that these same identities can be derived by taking the other approach to hypergeometric Bernoulli polynomials, namely by considering their exponential generating functions, and employing special partial fraction expansion formulas given in Lemma \ref{le:partial-fraction}.  As a result, we obtain the following identities (Theorem \ref{th:HBP-egf}):
\begin{align} 
 \sum_{m=1}^{M+N-2} m! a_m \sum_{k=0}^{n-m} (-1)^k \binom{n}{m; k} B_{N,n-m-k}(x)B_{M,k}(x)
 = (-1)^M\binom{M+N}{M}\delta_{n-M-N} & \notag  \\ 
  \hspace{50pt} + (-1)^M\sum_{m=0}^{N-1} \binom{n}{m; M} B_{N,n-m-M} +
(-1)^{n-N}\sum_{m=0}^{M-1}  \binom{n}{m; N}  B_{M,n-m-N}  \label{eq:main-result-2a}
\end{align}
and if $N\geq M$,
\begin{flalign}
& \sum_{m=M}^{N-1} \sum_{k=0}^{n-m}  \binom{n}{m; k} B_{N,n-m-k}(x_1)B_{M,k}(x_2) 
= \binom{n}{M} B_{N,n-M}(z) - \binom{n}{N}B_{M,n-N}(z) \label{eq:main-result-2b}
\end{flalign}
where $z=x_1+x_2$.  Observe that \eqref{eq:main-result-2a} is equivalent to \eqref{eq:main-result-1}.  Moreover, \eqref{eq:main-result-2b} is trivial when $M=N$.  To remedy this, we employ a derivative expansion formula to establish a recurrence formula for sums of products of hypergeometric Bernoulli polynomials.  We then use this recurrence to obtain a general formula for these sums of products,  thereby generalizing Kamano's formula for hypergeometric Bernoulli numbers given in \cite{kamano} and Dilcher's formula for Bernoulli polynomials given in \cite{dilcher}.  An example is the identity
$$
\sum_{k=0}^n \binom{n}{k} B_{N,n-k}(x_1)B_{N,k}(x_2)=\frac{1}{N}\left[(N-n)B_{N,n}(z)+n(z-1)B_{N,n-1}(z)\right] 
$$

\section{Hypergeometric Bernoulli Polynomials}

Let $N$ be a positive integer.  Following the work of A. Hassen and the first author in \cite{hn} we define {\em hypergeometric Bernoulli polynomials} $B_{N,n}(x)$ of order $N$ by the exponential generating function
    \begin{equation}
    \label{eq:egf-hyper}
   F_N(x,t)\equiv \frac{(t^N/N!)e^{xt}}{e^t-T_{N-1}(t)}=\sum_{n=0}^{\infty}B_{N,n}(x)\frac{t^n}{n!}
    \end{equation}
Here, $T_{N}(t)=\sum_{n=0}^{N}\frac{t^n}{n!}$ is the $N$-th Taylor polynomial of $e^t$.   In particular, when $N=1$, we recover the classical Bernoulli polynomials, i.e. $B_{1,n}(x)=B_n(x)$.  Hypergeometric Bernoulli polynomials play an important role in the study of hypergeometric zeta functions (see \cite{hn2}).

It is known that hypergeometric Bernoulli polynomials can also be defined by the following properties:
        \begin{flalign}
       & \mathrm{(i)} B_{N,0}(x) = 1 \label{itm:hypergeometric-bernoulli-initial-value} & \\
       & \mathrm{(ii)} B_{N,n}'(x) = n B_{N,n - 1}(x) \label{itm:hypergeometricbernoullidifferentialdefinition} & \\
       & \mathrm{(iii)} \int_0^1 {(1 - x)}^{N - 1} B_{N,n}(x)\,dx = \frac{1}{N}\delta_n  &
        \end{flalign}  
These properties were shown to be equivalent to definition \eqref{eq:egf-hyper} in \cite{hn}.
In either definition, {\em hypergeometric Bernoulli numbers} are defined analogously by $B_{N,m}=B_{N,m}(0)$.  Moreover, we define $B_{N,n}(x)=0$ for $n<0$.

We begin with an alternating convolution formula valid for Appell sequences, i.e. polynomial sequences that satisfy conditions \eqref{itm:hypergeometric-bernoulli-initial-value} and \eqref{itm:hypergeometricbernoullidifferentialdefinition}.
        \begin{theorem}
\label{th:convolution-appell}
Let $p_n(x)$ and $q_n(x)$ be two Appell sequences.  Then
\begin{equation} \label{eq:convolution-appell}
\sum_{k=0}^n (-1)^k \binom{n}{k} p_{n-k}(x)q_{k}(x) = c_n
\end{equation}
        where $\{c_n\}$ are constants independent of $x$.  Moreover, if $p_n(x)=q_n(x)$, then $c_n=0$ for every odd positive integer $n$.
\end{theorem}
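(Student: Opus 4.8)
The plan is to differentiate the left-hand side in $x$ and show the derivative vanishes identically, so that the sum is forced to be constant. Set
\[
S_n(x):=\sum_{k=0}^n (-1)^k \binom{n}{k} p_{n-k}(x)q_{k}(x).
\]
Applying the product rule and then the Appell relations $p_{n-k}'(x)=(n-k)p_{n-k-1}(x)$ and $q_k'(x)=kq_{k-1}(x)$ yields
\[
S_n'(x)=\sum_{k=0}^{n-1} (-1)^k \binom{n}{k}(n-k)\,p_{n-k-1}(x)q_{k}(x)+\sum_{k=1}^{n} (-1)^k \binom{n}{k}\,k\,p_{n-k}(x)q_{k-1}(x),
\]
where the $k=n$ term of the first sum and the $k=0$ term of the second have been dropped since the factors $(n-k)$ and $k$ kill them.

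Next I would reindex the second sum by $k\mapsto k+1$ and invoke the elementary identity $(k+1)\binom{n}{k+1}=(n-k)\binom{n}{k}$; the second sum then becomes $-\sum_{k=0}^{n-1}(-1)^k\binom{n}{k}(n-k)\,p_{n-k-1}(x)q_{k}(x)$, which is precisely the negative of the first sum. Hence $S_n'(x)\equiv 0$, so $S_n$ is independent of $x$ and one may take $c_n:=S_n(0)=\sum_{k=0}^n(-1)^k\binom{n}{k}p_{n-k}(0)q_k(0)$.

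For the final assertion, assume $p_n(x)=q_n(x)$ and substitute $k\mapsto n-k$ in the definition of $S_n(x)$; using $\binom{n}{n-k}=\binom{n}{k}$ and $(-1)^{n-k}=(-1)^n(-1)^k$ gives $S_n(x)=(-1)^nS_n(x)$, so $S_n(x)\equiv 0$, and therefore $c_n=0$, whenever $n$ is odd.

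I do not expect a genuine obstacle: the whole argument is one differentiation plus a single reindexing, and the only place demanding care is keeping track of which boundary terms vanish and applying $(k+1)\binom{n}{k+1}=(n-k)\binom{n}{k}$ in the right direction. As a sanity check and alternative, one can note that any Appell sequence has an exponential generating function of the form $A(t)e^{xt}$, so $S_n(x)/n!$ equals the coefficient of $t^n$ in $A(t)e^{xt}\cdot B(-t)e^{-xt}=A(t)B(-t)$, which carries no $x$-dependence; when $A=B$ this product is an even function of $t$, so all odd-degree coefficients vanish. I would nonetheless present the derivative argument, since it stays within the Appell-sequence characterization featured in this section.
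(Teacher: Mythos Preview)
Your argument is correct. The paper's formal proof takes a different route: it invokes the repeated integration-by-parts formula \eqref{eq:integrationbyparts} with $f=p_{n-1}$, $g=q_0$, and $p=n-1$, recognizes $g^{-(k+1)}=q_{k+1}/(k+1)!$ from the Appell property, and reads off the identity after absorbing the constant of integration. That said, immediately after the formal proof the paper explicitly records both of your arguments---the derivative computation showing $C_n'(x)=0$ and the generating-function observation $F(x,t)G(x,-t)=f(t)g(-t)$---as alternative proofs, so your approach is not merely correct but is one the authors themselves endorse. The only stylistic difference is in bookkeeping: the paper factors out $n$ via $n\binom{n-1}{k}=(n-k)\binom{n}{k}$ and $n\binom{n-1}{k-1}=k\binom{n}{k}$ before cancelling, whereas you reindex and use $(k+1)\binom{n}{k+1}=(n-k)\binom{n}{k}$; these are equivalent. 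The integration-by-parts proof has the virtue of tying the result back to the generalized Euler--Maclaurin framework that motivates the paper, while your derivative argument is shorter and entirely self-contained.
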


    \begin{proof}
        Set $ f(x) = p_{n-1}(x)$ , $g(x) = q_{0}(x)$, and $p = n-1$ in \eqref{eq:integrationbyparts}. Then using property \eqref{itm:hypergeometricbernoullidifferentialdefinition} for Appell sequences and the fact that $f^{(p+1)}=0$ (since $f$ is a polynomial of degree $p$), we obtain

        \begin{dmath*}
           \int p_{n-1}(x) = \sum_{k = 0}^{n-1} (-1)^k \frac{(n-1)_{(k)}}{(k+1)!} p_{n-1-k}(x) q_{k+1}(x)
        \end{dmath*}
where $n_{(k)}= n(n-1)(n-2)...(n-k+1)$ is the falling factorial.  On the other hand, since $p_{n}$ is an Appell sequence we have
        \begin{dmath*}
            \int p_{n-1}(x) = \frac{p_{n}(x)-c_n}{n} 
                    \end{dmath*}
        where $c_n$ is the constant of integration.  It follows from the substitution $ \ell = k +1 $ that
        
            \begin{dmath*}
                p_{n}(x) - c_n =  - \sum_{\ell = 1}^{n} (-1)^\ell \binom{n}{\ell} p_{n-\ell}(x) q_{\ell}(x)
            \end{dmath*}
        Rearranging terms, we get 
        \begin{dmath*}
            p_{n}(x) + \sum_{\ell = 1}^{n} (-1)^\ell \binom{n}{\ell} p_{n-\ell}(x) q_{\ell}(x) = c_n
        \end{dmath*}
        or equivalently,
        \[ \sum_{\ell = 0}^n (-1)^\ell \binom{n}{\ell}p_{n -\ell}(x) q_{\ell}(x) = c_n \]
        
        If we now assume $p_n(x)=q_n(x) $ and $ n $ is odd, then all the terms in the convolution sum cancel. Thus, $ c_n = 0$, which completes the proof.  
            \end{proof}
We note that Theorem \ref{th:convolution-appell} can also be proven by differentiating the left-hand side of \eqref{eq:convolution-appell} or by using exponential generating functions.  More precisely, define
\begin{equation}
C_n(x)=\sum_{k=0}^n (-1)^k \binom{n}{k} p_{n-k}(x)q_{k}(x) 
\end{equation}
Then using the fact that $p'_{n-k}(x)=(n-k)p_{n-k-1}(x)$ and $q'_k(x)=kq_{k-1}(x)$, we have
\begin{align}
C_n'(x) & = \sum_{k=0}^n (-1)^k \binom{n}{k} [(n-k)p_{n-k-1}(x)q_{k}(x)+kp_{n-k}(x)q_{k-1}(x)]  \notag \\
& =  n \sum_{k=0}^n (-1)^k \binom{n-1}{k}p_{n-k-1}(x)q_{k}(x)+n \sum_{k=0}^n (-1)^k \binom{n-1}{k-1}p_{n-k}(x)q_{k-1}(x)] \notag \\
& = n \sum_{k=0}^{n-1} (-1)^k \binom{n-1}{k}p_{n-k-1}(x)q_{k}(x)-n \sum_{k=0}^{n-1} (-1)^k \binom{n-1}{k}p_{n-k-1}(x)q_{k}(x)] \notag \\
& = 0 \notag
\end{align}
Thus, $C(x)$ is constant, which proves Theorem \ref{th:convolution-appell}.  Alternatively, $p_n(x)$ and $q_n(x)$ have exponential generating functions of the form
\begin{align}
F(x,t)= f(t)e^{xt}=\sum_{n=0}^{\infty}p_n(x)\frac{t^n}{n!} \notag \\
G(x,t)=g(t)e^{xt}=\sum_{n=0}^{\infty}q_n(x)\frac{t^n}{n!} \notag
\end{align}
It follows that
$$
F(x,t)G(x,-t)=\sum_{n=0}^{\infty}\left(\sum_{k=0}^n (-1)^k \binom{n}{k} p_{n-k}(x)q_k(x)\right)\frac{t^n}{n!}=\sum_{n=0}^{\infty}C_n(x)\frac{t^n}{n!}
$$
But we also have $F(x,t)=G(x,-t)=f(t)g(t)$, which is independent of $x$.  Thus, $C_n(x)=c_n$ is constant and Theorem  \ref{th:convolution-appell}  follows.
\

Let us now set $p_n(x)=B_{N,n}(x)$ and $q_n(x)=B_{M,n}$ in Theorem \ref{th:convolution-appell}.  This yields the following theorem.

    \begin{theorem}
        \label{th:convolution}
        Let $B_{M,n}(x)$ and $B_{N,n}(x)$ be two hypergeometric Bernoulli polynomials of orders $M$ and $N$, respectively.  Then
        \begin{equation} \label{eq:convolution}
        \sum_{k=0}^n (-1)^k \binom{n}{k} B_{N,n-k}(x)B_{M,k}(x) = c_n
        \end{equation}
        where $\{c_n\}$ are constants independent of $x$.  Moreover, if $M=N$, then $c_n=0$ for every odd positive integer $n$.
    \end{theorem}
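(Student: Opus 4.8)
The plan is to read Theorem \ref{th:convolution} as the immediate specialization of Theorem \ref{th:convolution-appell} obtained by choosing the two Appell sequences to be hypergeometric Bernoulli polynomials. First I would observe that properties \eqref{itm:hypergeometric-bernoulli-initial-value} and \eqref{itm:hypergeometricbernoullidifferentialdefinition}, established in \cite{hn}, say precisely that $B_{N,0}(x)=1$ and $B_{N,n}'(x)=nB_{N,n-1}(x)$; that is, $\{B_{N,n}(x)\}$ is an Appell sequence, and likewise $\{B_{M,n}(x)\}$. Then I would apply Theorem \ref{th:convolution-appell} with $p_n(x)=B_{N,n}(x)$ and $q_n(x)=B_{M,n}(x)$ to obtain
\[
\sum_{k=0}^n (-1)^k \binom{n}{k} B_{N,n-k}(x)B_{M,k}(x) = c_n,
\]
with the $c_n$ independent of $x$, which is \eqref{eq:convolution}.

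For the \emph{moreover} clause, when $M=N$ the two sequences coincide, $p_n(x)=q_n(x)=B_{N,n}(x)$, so the corresponding part of Theorem \ref{th:convolution-appell} applies verbatim and gives $c_n=0$ for every odd positive integer $n$. Concretely, for $M=N$ the summand at index $k$ and the summand at index $n-k$ are negatives of each other when $n$ is odd (since $(-1)^{n-k}\binom{n}{n-k}=-(-1)^k\binom{n}{k}$ for odd $n$), and no index is its own partner, so the whole sum telescopes to $0$.

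There is essentially no obstacle here: the entire mathematical content sits in Theorem \ref{th:convolution-appell}, and the only thing to check is the harmless point that the families $B_{N,n}(x)$ and $B_{M,n}(x)$ really do satisfy the Appell defining relations \eqref{itm:hypergeometric-bernoulli-initial-value}--\eqref{itm:hypergeometricbernoullidifferentialdefinition}. If one preferred a self-contained argument, one could instead specialize the generating-function proof of Theorem \ref{th:convolution-appell}: from \eqref{eq:egf-hyper} the product $F_N(x,t)F_M(x,-t)$ has the factor $e^{xt}e^{-xt}=1$ and hence equals $\dfrac{(t^N/N!)\,((-t)^M/M!)}{(e^t-T_{N-1}(t))(e^{-t}-T_{M-1}(-t))}$, manifestly free of $x$; reading off the coefficient of $t^n/n!$ identifies this with $c_n$. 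But invoking Theorem \ref{th:convolution-appell} directly is the cleaner route.
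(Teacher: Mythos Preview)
Your proposal is correct and matches the paper's approach exactly: the paper derives Theorem~\ref{th:convolution} simply by setting $p_n(x)=B_{N,n}(x)$ and $q_n(x)=B_{M,n}(x)$ in Theorem~\ref{th:convolution-appell}, relying on properties \eqref{itm:hypergeometric-bernoulli-initial-value}--\eqref{itm:hypergeometricbernoullidifferentialdefinition} to verify these are Appell sequences. Your optional generating-function remark via $F_N(x,t)F_M(x,-t)$ is also anticipated in the paper's discussion following Theorem~\ref{th:convolution-appell}.
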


Since the left hand side of \eqref{th:convolution} is independent of $x$, we obtain the following corollary as a result.

    \begin{corollary} For any two real values $a$ and $b$, we have
        \begin{equation} \label{eq:convolutiontwovalues}
       c_n=\sum_{k=0}^n (-1)^k \binom{n}{k} B_{N,n-k}(b)B_{M,k}(b)  = \sum_{k=0}^n (-1)^k \binom{n}{k} B_{N,n-k}(a)B_{M,k}(a)
        \end{equation}
    \end{corollary}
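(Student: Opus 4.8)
The plan is to invoke Theorem \ref{th:convolution} directly, since the corollary is essentially a restatement of the conclusion that the convolution sum is constant in $x$. That theorem asserts the identity
\[
\sum_{k=0}^n (-1)^k \binom{n}{k} B_{N,n-k}(x)B_{M,k}(x) = c_n,
\]
where $c_n$ is a constant independent of $x$. Because each $B_{N,n-k}(x)$ and $B_{M,k}(x)$ is a genuine polynomial in $x$, the left-hand side defines a polynomial function of $x$ that is identically equal to the constant $c_n$; in particular its value at every real number equals $c_n$.

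First I would fix arbitrary real numbers $a$ and $b$ and substitute $x = a$ and then $x = b$ into \eqref{eq:convolution}. Each substitution produces an expression equal to $c_n$, hence the two expressions are equal to each other and to $c_n$, which is exactly the chain of equalities \eqref{eq:convolutiontwovalues}. No further computation is required.

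There is no genuine obstacle here: the mathematical content is entirely carried by Theorem \ref{th:convolution}, and this corollary merely records the practically useful fact that $c_n$ may be evaluated at any convenient point — for instance at $x = 0$, which expresses $c_n$ purely in terms of the hypergeometric Bernoulli numbers $B_{N,n-k}$ and $B_{M,k}$, or at $x = 1$. The only point worth emphasizing is that the phrase ``constant independent of $x$'' is precisely the assertion that all such evaluations coincide.
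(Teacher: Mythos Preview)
Your proposal is correct and matches the paper's own treatment: the paper states this corollary immediately after Theorem~\ref{th:convolution} with the remark that it follows because the left-hand side of \eqref{eq:convolution} is independent of $x$, which is exactly the substitution argument you give.
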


    Surprisingly, in the case of classical Bernoulli numbers, \eqref{eq:convolutiontwovalues} becomes trivial when $M=N=1$ and $a=0$, $b=1$ since it is known that $B_n(1)=(-1)^nB_n(0)$.  If we denote $B_n =B_n(0)$, then \eqref{eq:convolutiontwovalues} becomes
    \begin{equation} \label{eq:bernoulliconvolution}
    (-1)^n \sum_{k=0}^n (-1)^k \binom{n}{k} B_{n-k}B_k = \sum_{k=0}^n (-1)^k \binom{n}{k} B_{n-k}B_k
    \end{equation}
    When $n$ is odd, the left and right hand sums in \eqref{eq:bernoulliconvolution} are negatives of each other, which implies
    \begin{equation} \label{eq:bernoullisymmetricterms}
    \sum_{k=0}^n (-1)^k \binom{n}{k} B_{n-k}B_k=0
    \end{equation}
    However, we had already deduced this earlier since \eqref{eq:bernoullisymmetricterms} is just a restatement of \eqref{eq:convolution} with $c_n=0$ for $n$ odd.  On the other hand, when $n$ is even, then the left and right hand sides of \eqref{eq:bernoulliconvolution} are identical and we are left with a trivial identity.
    
    Fortunately, \eqref{eq:convolutiontwovalues}  can be used to obtain new non-trivial identities for hypergeometric Bernoulli numbers $B_{N,n}$ with $N>1$.  We demonstrate this next with the use of the following lemma which helps us to evaluate hypergeometric Bernoulli polynomials $B_{N,n}(x)$ at $x=1$.

    \begin{lemma} \label{le:formula-B_N}
        \begin{equation} \label{eq:formula-B_Nv2}
        B_{N,n+N}(1)=\frac{(n+1)^{(N)}}{N!}\delta_n + \sum_{j=1}^{N} \binom{n+N}{n+j} B_{N,n+j}
        \end{equation}
        or equivalently,

        \begin{equation} \label{eq:formula-B_N}
        B_{N,k}(1)=\frac{(k-N+1)^{(N)}}{N!}\delta_{k-N} + \sum_{j=0}^{N-1} \binom{k}{k-j} B_{N,k-j}
        \end{equation}
        where we define $B_{N,k}=0$ for $k<0$.
        
    \end{lemma}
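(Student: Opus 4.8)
The two displayed formulas \eqref{eq:formula-B_Nv2} and \eqref{eq:formula-B_N} are related by the substitution $k=n+N$ together with a reindexing of the sum, so it suffices to prove the first one. The plan is to extract $B_{N,n+N}(1)$ directly from the normalization $\int_0^1 (1-x)^{N-1}B_{N,n}(x)\,dx=\tfrac1N\delta_n$ (property (iii)) by expanding the integrand using the repeated integration by parts formula \eqref{eq:integrationbyparts}.

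First I would apply \eqref{eq:integrationbyparts} with $f(x)=(1-x)^{N-1}$, $g(x)=B_{N,n}(x)$ and $p=N-1$. Since $f$ has degree $N-1$ we have $f^{(N)}\equiv 0$, so the remainder integral is a constant and
\[
\int_0^1 (1-x)^{N-1}B_{N,n}(x)\,dx=\left[\,\sum_{k=0}^{N-1}(-1)^k f^{(k)}(x)\,g^{-(k+1)}(x)\,\right]_{x=0}^{x=1}.
\]
The two ingredients come from the Appell structure: $f^{(k)}(x)=(-1)^k\frac{(N-1)!}{(N-1-k)!}(1-x)^{N-1-k}$, and, fixing the antiderivative of $B_{N,m}(x)$ to be $\frac{1}{m+1}B_{N,m+1}(x)$ (as forced by $B_{N,m+1}'(x)=(m+1)B_{N,m}(x)$), we get $g^{-(k+1)}(x)=\frac{n!}{(n+k+1)!}B_{N,n+k+1}(x)$. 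The two factors of $(-1)^k$ cancel, so the bracketed expression equals $\sum_{k=0}^{N-1}\frac{(N-1)!\,n!}{(N-1-k)!\,(n+k+1)!}(1-x)^{N-1-k}B_{N,n+k+1}(x)$.

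Next I would evaluate the boundary terms. At $x=1$ each power $(1-x)^{N-1-k}$ vanishes except for $k=N-1$, where it equals $1$, leaving only $\frac{(N-1)!\,n!}{(n+N)!}B_{N,n+N}(1)$; at $x=0$ all the powers equal $1$, giving $\sum_{k=0}^{N-1}\frac{(N-1)!\,n!}{(N-1-k)!\,(n+k+1)!}B_{N,n+k+1}$. Equating the difference with $\tfrac1N\delta_n$ and solving for $B_{N,n+N}(1)$ yields
\[
B_{N,n+N}(1)=\frac{(n+N)!}{N!\,n!}\,\delta_n+\sum_{k=0}^{N-1}\frac{(n+N)!}{(N-1-k)!\,(n+k+1)!}\,B_{N,n+k+1}.
\]
It then remains only to recognize $\frac{(n+N)!}{N!\,n!}=\binom{n+N}{N}=\frac{(n+1)^{(N)}}{N!}$ and, after the shift $j=k+1$, $\frac{(n+N)!}{(N-j)!\,(n+j)!}=\binom{n+N}{n+j}$, which turns the display into \eqref{eq:formula-B_Nv2}; substituting $k=n+N$ and reindexing via $i=N-j$ then gives \eqref{eq:formula-B_N}.

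The calculation is essentially factorial bookkeeping; the one point deserving care is to fix a single consistent family of antiderivatives $g^{-(k)}(x)=\frac{n!}{(n+k)!}B_{N,n+k}(x)$ so that the telescoping identity \eqref{eq:integrationbyparts} applies, the additive constants being irrelevant since we evaluate a definite integral. As a cross-check (and an independent proof) one may instead write $e^t=\bigl(e^t-T_{N-1}(t)\bigr)+T_{N-1}(t)$ in \eqref{eq:egf-hyper} to obtain $F_N(1,t)=\frac{t^N}{N!}+T_{N-1}(t)F_N(0,t)$; comparing coefficients of $t^n/n!$ on both sides reproduces the lemma at once.
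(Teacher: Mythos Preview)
Your argument is correct and follows essentially the same route as the paper: apply the repeated integration-by-parts identity \eqref{eq:integrationbyparts} with $f(x)=(1-x)^{N-1}$, $g(x)=B_{N,n}(x)$, $p=N-1$, evaluate the resulting boundary terms over $[0,1]$, equate with the normalization $\tfrac1N\delta_n$, and solve for $B_{N,n+N}(1)$. Your added generating-function cross-check ($F_N(1,t)=\tfrac{t^N}{N!}+T_{N-1}(t)F_N(0,t)$) is a nice independent confirmation not present in the paper.
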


\begin{proof}
Set $f(x)=(1-x)^{N-1}$, $g(x)=B_{N,n}(x)$, and $p=N-1$.  As $f$ and $g$ are Appell sequences, it follows from \eqref{eq:integrationbyparts} and the fact $f^{(p+1)}=0$ (since $f$ is a polynomial of degree $p$) that
$$
    \int{(1-x)^{N-1}B_{N,n}(x)dx} = \sum_{k=0}^{N-1} \frac{(N-1)_{(k)}}{(n+1)^{(k+1)}}(1-x)^{N-1-k} B_{N,n+k+1}(x)
$$
where $(N-1)_{(k)}$ and $(n+1)^{(k+1)}$ denote falling and rising factorials, respectively.
Integrating this equation over the interval $[0,1]$ yields
$$
    \int_0^1{(1-x)^{N-1}B_{N,n}(x)dx} =\frac{(N-1)!}{(n+1)^{N}} B_{N,n+N}(1)- \sum_{k=0}^{N-1} \frac{(N-1)_{(k)}}{(n+1)^{(k+1)}}B_{N,n+k+1}(0)
$$
Equating this answer with \eqref{itm:hypergeometricbernoullidifferentialdefinition} in the definition of hypergeometric Bernoulli polynomials and solving for $B_{N,n+N}$ gives \eqref{eq:formula-B_Nv2} as desired.  Formula \eqref{eq:formula-B_N} now follows easily from \eqref{eq:formula-B_Nv2} by making a change of variable and re-indexing.
\end{proof}

We now make use of Lemma \ref{le:formula-B_N} to prove identities involving sums of products of hypergeometric Bernoulli numbers.  

\begin{theorem} \label{th:hypergeometric-bernoulli}
    For positive integers $N$ and $M$, we have  
     \begin{multline}
\shoveleft{\sum_{\substack {0\leq i \leq M-1 \\ 0 \leq j \leq N-1 \\(i,j)\neq (0,0)}} \left[\sum_{k=0}^n (-1)^k \binom{n}{j;i;k-i} B_{N,n-k-j} B_{M,k-i}\right]}  \\
 =   (-1)^{M-1} \left[\binom{M+N}{M}\delta_{n-M-N}+\sum_{j=0}^{N-1}  \binom{n}{M;j}B_{N,n-M-j} +(-1)^{n-M-N}\sum_{i=0}^{M-1}  \binom{n}{N; i}B_{M,n-N-i}\right] \label{eq:hypergeometric-bernoulli}
    \end{multline}
where
$$ \binom{n}{n_1;n_2;...;n_k}=\frac{n!}{n_1!n_2!...n_k!(n-n_1-n_2-...-n_k)!}$$
denote a multinomial coefficient.
\end{theorem}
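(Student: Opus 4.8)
The plan is to compute the constant $c_n$ of Theorem~\ref{th:convolution} in two ways, using the two evaluations of the convolution sum at $x=0$ and at $x=1$ recorded in \eqref{eq:convolutiontwovalues}. Taking $x=0$ gives at once
\[
c_n=\sum_{k=0}^n (-1)^k \binom{n}{k} B_{N,n-k}B_{M,k},
\]
while taking $x=1$ gives $c_n=\sum_{k=0}^n (-1)^k \binom{n}{k} B_{N,n-k}(1)B_{M,k}(1)$. The strategy is to expand each factor $B_{N,n-k}(1)$ and $B_{M,k}(1)$ by Lemma~\ref{le:formula-B_N}, multiply out, recognize one piece of the resulting expansion as the $x=0$ value of $c_n$ (so that it cancels when the two expressions for $c_n$ are equated), and identify what remains with \eqref{eq:hypergeometric-bernoulli}.

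Concretely, I would first rewrite \eqref{eq:formula-B_N} in the compact form
\[
B_{N,m}(1)=\delta_{m-N}+\sum_{j=0}^{N-1}\binom{m}{j}B_{N,m-j},
\]
using $\binom{m}{m-j}=\binom{m}{j}$ together with the fact that the rising-factorial coefficient $(m-N+1)^{(N)}/N!$ equals $1$ whenever $\delta_{m-N}$ is nonzero, and likewise for $B_{M,m}(1)$. Substituting both expansions into the $x=1$ expression for $c_n$ and multiplying out produces four groups of terms according to whether one selects the delta-part or the sum-part of each factor: (a) delta $\times$ delta, (b) delta $\times$ sum, (c) sum $\times$ delta, (d) sum $\times$ sum. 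In group (d) the three binomial coefficients combine via $\binom{n}{k}\binom{n-k}{j}\binom{k}{i}=\binom{n}{j;i;k-i}$, so group (d) becomes
\[
\sum_{j=0}^{N-1}\sum_{i=0}^{M-1}\sum_{k=0}^n (-1)^k \binom{n}{j;i;k-i}\, B_{N,n-k-j}B_{M,k-i},
\]
whose $(i,j)=(0,0)$ term is exactly $\sum_{k}(-1)^k\binom{n}{k}B_{N,n-k}B_{M,k}$, i.e.\ the $x=0$ value of $c_n$. Equating the two expressions for $c_n$ and cancelling this term leaves
\[
\Big[\,\text{group (d) summed over }(i,j)\neq(0,0)\,\Big]=-(a)-(b)-(c),
\]
and the left-hand side here is exactly the left-hand side of \eqref{eq:hypergeometric-bernoulli}.

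Finally, one contracts the Kronecker deltas in (a), (b), (c). In (a) the two deltas force $k=M$ and $n=M+N$, giving $(-1)^M\binom{M+N}{M}\delta_{n-M-N}$; in (b) the delta forces $k=n-N$, and $\binom{n}{n-N}\binom{n-N}{i}=\binom{n}{N;i}$ gives $(-1)^{n-N}\sum_{i=0}^{M-1}\binom{n}{N;i}B_{M,n-N-i}$; in (c) the delta forces $k=M$, and $\binom{n}{M}\binom{n-M}{j}=\binom{n}{M;j}$ gives $(-1)^{M}\sum_{j=0}^{N-1}\binom{n}{M;j}B_{N,n-M-j}$. Negating these three and pulling out $(-1)^{M-1}$ --- using $-(-1)^M=(-1)^{M-1}$ for (a) and (c), and $-(-1)^{n-N}=(-1)^{M-1}(-1)^{n-M-N}$ for (b) --- reproduces the right-hand side of \eqref{eq:hypergeometric-bernoulli} verbatim. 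The argument is essentially bookkeeping, as all the substance is already packed into Theorem~\ref{th:convolution} and Lemma~\ref{le:formula-B_N}; the only points requiring care are keeping the summation ranges straight (the bounds $0\le j\le N-1$, $0\le i\le M-1$ come directly from Lemma~\ref{le:formula-B_N}, and the convention $B_{N,m}=0$ for $m<0$ disposes of out-of-range terms), reducing products of binomial coefficients to multinomial coefficients, and tracking signs through the three delta contractions. I expect no genuine obstacle beyond this careful bookkeeping.
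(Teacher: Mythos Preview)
Your proposal is correct and follows essentially the same route as the paper's proof: set $a=0$, $b=1$ in \eqref{eq:convolutiontwovalues}, expand both factors at $x=1$ via Lemma~\ref{le:formula-B_N}, distribute into four groups, cancel the $(i,j)=(0,0)$ piece against the $x=0$ expression, and contract the Kronecker deltas with the multinomial/sign bookkeeping you describe. Your write-up is in fact more explicit than the paper's about the binomial-to-multinomial reductions and the sign extraction of $(-1)^{M-1}$, but the argument is the same.
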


\begin{proof} We set $a=0$ and $b=1$ in \eqref{eq:convolutiontwovalues} and use \eqref{eq:formula-B_N} to obtain
        \begin{multline} 
        \label{eq:hypergeometric-expansion}
        \sum_{k=0}^n (-1)^k \binom{n}{k} \left[\frac{(n-k-N+1)^{(N)}}{N!}\delta_{n-k-N} + \sum_{j=0}^{N-1} \binom{n-k}{n-k-j} B_{N,n-k-j}(0)\right] \left[\frac{(k-M+1)^{(M)}}{M!}\delta_{k-M}\right. \\
        + \left .\sum_{i=0}^{M-1} \binom{k}{k-i} B_{M,k-i}(0)\right]
        = \sum_{k=0}^n (-1)^k  \binom{n}{k}  B_{N,n-k}(0)B_{M,k}(0)
        \end{multline}
        Then expanding the product inside the left-most summation above yields four separate summations that can be rearranged and simplified as follows:
        \begin{dmath}
        (-1)^M \left[\binom{M+N}{M}\delta_{n-M-N}+\sum_{j=0}^{N-1}  \binom{n}{M;j}B_{N,n-M-j} +(-1)^{n-M-N}\sum_{i=0}^{M-1}  \binom{n}{N;i}B_{M,n-N-i}\right]
+        \sum_{j=0}^{N-1} \sum_{i=0}^{M-1} \sum_{k=0}^n (-1)^k \binom{n}{j;i;k-i} B_{N,n-k-j} B_{M,k-i}
= \sum_{k=0}^n (-1)^k  \binom{n}{k}  B_{N,n-k}B_{M,k}
        \end{dmath}
Next, observe that the terms in the triple summation above for $i=j=0$ yield the same sum as that on the right hand side, thus cancelling each other.  Then solving for the remaining terms in the triple summation yields \eqref{eq:hypergeometric-bernoulli} as desired.
\end{proof}

We now consider special cases of Theorem \ref{th:hypergeometric-bernoulli}.  For example, the following corollary holds if $M=1$.

\begin{corollary} \label{cor:HBP-M=1}
For any positive integer $N$ we have
    \begin{equation}
\sum_{j=1}^{N-1}\sum_{k=0}^n  (-1)^k \binom{n}{j, \ k} B_kB_{N,n-k-j} = (N+1)\delta_{n-N-1} +(-1)^{n-N-1} \binom{n}{N}B_{n-N} + \sum_{j=0}^{N-1} \binom{n}{1, \ j} B_{N,n-1-j} 
    \label{eqn:hypergeometric_formulaM=1}
    \end{equation}
\end{corollary}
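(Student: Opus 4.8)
The plan is to obtain \eqref{eqn:hypergeometric_formulaM=1} as the specialization $M=1$ of Theorem~\ref{th:hypergeometric-bernoulli}, so the work consists entirely of checking that each piece of \eqref{eq:hypergeometric-bernoulli} collapses to the claimed form. First I would recall that for $N=1$ the Taylor polynomial $T_0(t)=1$, so the generating function \eqref{eq:egf-hyper} reduces to $F_1(x,t)=te^{xt}/(e^t-1)$, which is exactly \eqref{eq:egf}; hence $B_{1,n}(x)=B_n(x)$ and in particular $B_{1,m}=B_m$ (with the convention $B_{1,m}=0$ for $m<0$ inherited from Theorem~\ref{th:hypergeometric-bernoulli}). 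With $M=1$ the index $i$ in \eqref{eq:hypergeometric-bernoulli} is forced to take the single value $i=0$, so the outer constraint $(i,j)\neq(0,0)$ becomes simply $j\neq 0$; thus the double sum over $(i,j)$ degenerates to a single sum over $1\le j\le N-1$.

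Next I would simplify the multinomial coefficients. On the left side, $\binom{n}{j;i;k-i}$ with $i=0$ is $\binom{n}{j;0;k}=\dfrac{n!}{j!\,k!\,(n-j-k)!}=\binom{n}{j,\ k}$, and $B_{M,k-i}=B_{1,k}=B_k$, so the left side of \eqref{eq:hypergeometric-bernoulli} becomes $\sum_{j=1}^{N-1}\sum_{k=0}^n(-1)^k\binom{n}{j,\ k}B_k\,B_{N,n-k-j}$, matching the left side of \eqref{eqn:hypergeometric_formulaM=1}. On the right side, $(-1)^{M-1}=(-1)^0=1$; the delta term is $\binom{M+N}{M}\delta_{n-M-N}=\binom{N+1}{1}\delta_{n-N-1}=(N+1)\delta_{n-N-1}$; the first sum is $\sum_{j=0}^{N-1}\binom{n}{M;j}B_{N,n-M-j}=\sum_{j=0}^{N-1}\binom{n}{1,\ j}B_{N,n-1-j}$; and the last sum, with $i$ again forced to $0$, is $(-1)^{n-M-N}\binom{n}{N;0}B_{M,n-N}=(-1)^{n-N-1}\binom{n}{N}B_{n-N}$, using $\binom{n}{N;0}=\binom{n}{N}$ and $B_{1,n-N}=B_{n-N}$.

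Assembling these substitutions yields precisely \eqref{eqn:hypergeometric_formulaM=1}. There is no genuine obstacle here: the proof is pure bookkeeping, and the only points that require a moment's attention are the collapse of the index $i$ to a single value once $M=1$ (which is what removes the inner sum over $i$ and reduces the constraint $(i,j)\neq(0,0)$ to $1\le j\le N-1$) and the identification of order-$1$ hypergeometric Bernoulli numbers with the classical ones. I would present the argument as exactly this substitution, writing out the degenerate index sets explicitly and then citing Theorem~\ref{th:hypergeometric-bernoulli}.
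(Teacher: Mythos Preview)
Your proposal is correct and matches the paper's own treatment exactly: the paper introduces this corollary with the single sentence ``the following corollary holds if $M=1$'' and gives no further argument, so the bookkeeping you spell out---collapsing $i$ to $0$, reducing the constraint to $1\le j\le N-1$, and identifying $B_{1,m}=B_m$---is precisely the intended (and only) content of the proof.
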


In addition, if we set $N=2$ in Corollary \ref{cor:HBP-M=1}, then we obtain

\begin{corollary}
For $n\geq 1$ we have
\begin{equation}
\sum_{k=0}^{n}  (-1)^k \binom{n}{k} B_kB_{2,n-k} =  B_{2,n}+ n B_{2,n-1}  -  \frac{n}{2}B_{n-1}
    \label{eqn:hypergeometric_formulaM=1andN=2}
    \end{equation}
or equivalently,
    \begin{equation}
    \sum\limits_{k = 0}^n \binom{n}{k} B_k B_{2,n - k} = B_{2,n} - \frac{n}{2} B_{n - 1}
    \label{eqn:hypergeometric_formulaM=1andN=2version2}
    \end{equation}
\end{corollary}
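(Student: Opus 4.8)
The plan is to specialize Corollary~\ref{cor:HBP-M=1} to the case $N=2$ and then tidy up the resulting coefficients. When $N=2$ the outer sum $\sum_{j=1}^{N-1}$ in \eqref{eqn:hypergeometric_formulaM=1} collapses to the single term $j=1$, and the right-hand side reduces to a short explicit expression. Using $\binom{n}{1;k}=n\binom{n-1}{k}$ (valid for $0\le k\le n-1$; the $k=n$ term drops out since $B_{2,-1}=0$), $\binom{n}{1;0}=n$, $\binom{n}{1;1}=n(n-1)$, and $\binom{n}{N}=\binom{n}{2}$, identity \eqref{eqn:hypergeometric_formulaM=1} with $N=2$ becomes
\begin{equation*}
n\sum_{k=0}^{n-1}(-1)^k\binom{n-1}{k}B_kB_{2,n-1-k}
=3\delta_{n-3}+(-1)^{n-1}\binom{n}{2}B_{n-2}+nB_{2,n-1}+n(n-1)B_{2,n-2}.
\end{equation*}
Replacing $n$ by $n+1$ throughout and dividing by $n+1$ turns the left side into the target sum $\sum_{k=0}^{n}(-1)^k\binom{n}{k}B_kB_{2,n-k}$ and, after the simplification $\binom{n+1}{2}/(n+1)=n/2$, turns the right side into $\dfrac{3\delta_{n-2}}{n+1}+(-1)^{n}\dfrac{n}{2}B_{n-1}+B_{2,n}+nB_{2,n-1}$.

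It then remains to check that $\dfrac{3\delta_{n-2}}{n+1}+(-1)^{n}\dfrac{n}{2}B_{n-1}=-\dfrac{n}{2}B_{n-1}$, i.e.\ that $\dfrac{3\delta_{n-2}}{n+1}+\dfrac{n}{2}\bigl(1+(-1)^{n}\bigr)B_{n-1}=0$ for every $n\ge 1$. This is immediate when $n$ is odd, since then $1+(-1)^{n}=0$ and $\delta_{n-2}=0$; it holds for even $n\ge 4$ because $B_{n-1}=0$ (its index being odd and at least $3$); and in the single remaining case $n=2$ it holds because $B_1=-\tfrac12$ makes $\tfrac{3}{3}+2B_1=0$. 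This establishes the first identity. For the ``equivalent'' second form, I would note that deleting the factor $(-1)^k$ changes the sum by $-2\sum_{k\ \mathrm{odd}}\binom{n}{k}B_kB_{2,n-k}$; since $B_k=0$ for every odd $k\ge 3$ and $B_1=-\tfrac12$, only $k=1$ contributes, and this change equals $-2\binom{n}{1}B_1B_{2,n-1}=nB_{2,n-1}$, which precisely accounts for the removal of the $nB_{2,n-1}$ term on the right.

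The argument is entirely computational; the only point needing care is the penultimate step, where one must notice that the isolated Kronecker-delta contribution and the odd-index value $B_1$ conspire to cancel exactly at $n=2$, so that the parity case analysis closes with no exceptions.
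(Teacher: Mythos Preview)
Your argument is correct and follows essentially the same route as the paper: specialize Corollary~\ref{cor:HBP-M=1} to $N=2$, shift $n\mapsto n+1$, divide by $n+1$, and then use the parity of $n$ together with $B_1=-\tfrac12$ (and $B_{2k+1}=0$ for $k\ge1$) to collapse the $\delta$-term with the $(-1)^n\tfrac{n}{2}B_{n-1}$ term and to pass between the alternating and nonalternating forms. The only cosmetic slip is the sign in your last paragraph: deleting $(-1)^k$ adds (not subtracts) $2\sum_{k\text{ odd}}\binom{n}{k}B_kB_{2,n-k}=-nB_{2,n-1}$ to the left side, which is exactly what removes $nB_{2,n-1}$ from the right---so your conclusion stands once that sign is corrected.
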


\begin{proof} Observe that if $N=2$ in \eqref{eqn:hypergeometric_formulaM=1}, then $j$ only takes on the value 1 for the left-hand side, in which case $ \binom{n}{1, \ k} =n \binom{n-1}{k}$.  Moreover, on the right-hand side we have $\binom{n}{1, \ j} = n\binom{n-1}{j}$.  This simplifies \eqref{eqn:hypergeometric_formulaM=1andN=2} as follows:
$$
\sum_{k=0}^n  (-1)^k n \binom{n-1}{k} B_kB_{2,n-k-1} = 3\delta_{n-3} + (-1)^{n-3} \binom{n}{2}B_{n-2}  + \sum_{j=0}^{1}  n \binom{n-1}{j} B_{2,n-1-j} 
$$
Next, we replace $n$ by $n+1$ and simplify to obtain
$$
(n+1) \sum_{k=0}^{n+1}  (-1)^k \binom{n}{k} B_kB_{2,n-k} = 3\delta_{n-2} +(-1)^{n-2} \frac{(n+1)n}{2}B_{n-1}  +(n+1)B_{2,n}  + (n+1)nB_{2,n-1}
$$
We then divide both sides by $n+1$ and use the fact that $3\delta_{n-2}/(n+1)=\delta_{n-2}$ on the right-hand side and $B_{n+1}=0$ on the left-hand side for every even integer $n> 1$ to obtain
$$
\sum_{k=0}^{n}  (-1)^k \binom{n}{k} B_kB_{2,n-k} = \delta_{n-2} + (-1)^{n-2} \frac{n}{2}B_{n-1} +B_{2,n}  + nB_{2,n-1}
$$
Equation \eqref{eqn:hypergeometric_formulaM=1andN=2} now follows from the fact that $\delta_{n-2}+(-1)^{n-2}\frac{n}{2}B_{n-1}= - \frac{n}{2}B_{n-1}$.  To obtain \eqref{eqn:hypergeometric_formulaM=1andN=2version2}, we use the identity
$$
\sum_{k=0}^{n}  (-1)^k \binom{n}{k} B_kB_{2,n-k} = nB_{2,n-1}+ \sum_{k=0}^{n} \binom{n}{k} B_kB_{2,n-k}
$$
which holds since $B_1=-1/2$ and $B_{2k+1}=0$ for $k\geq 0$.
\end{proof}

    Observe that formula \eqref{eqn:hypergeometric_formulaM=1andN=2version2} generalizes \eqref{eq:euler} and allows us to calculate the hypergeometric Bernoulli numbers $B_{2,n}$ in terms of the classical Bernoulli numbers $B_n$.  This is useful since the odd $B_{2n-1}$ are known to vanish except for $B_1$ and thus \eqref{eqn:hypergeometric_formulaM=1andN=2version2} allows us to calculate $B_{2,n}$ more efficiently.

Next, we discuss another interesting case of Theorem \ref{th:hypergeometric-bernoulli}, namely when $M=N$.  This is the content of the following corollary.

\begin{corollary} \label{th:hypergeometric-bernoulli-M=N}
    For any positive integer $N$ we have
    \begin{multline}     \label{eqn:hypergeometric_formula-M=N}
\sum_{\substack {0\leq i \leq N-1 \\ 0 \leq j \leq N-1 \\
(i,j)\neq (0,0)}} \left[\sum_{k=0}^n (-1)^k \binom{n}{j;i;k-i} B_{N,n-k-j} B_{N,k-i}\right] \\
=        (-1)^{N-1} \left[\binom{2N}{N}\delta_{n-2N}+(1+(-1)^n)\sum_{j=0}^{N-1}  \binom{n}{N; j}B_{N,n-N-j}\right]
    \end{multline}
\end{corollary}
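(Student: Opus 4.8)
The plan is to specialize Theorem~\ref{th:hypergeometric-bernoulli} to the case $M=N$ and simplify the right-hand side. Setting $M=N$ in \eqref{eq:hypergeometric-bernoulli}, the index set $\{0\leq i\leq M-1,\ 0\leq j\leq N-1,\ (i,j)\neq(0,0)\}$ of the outer double sum becomes $\{0\leq i\leq N-1,\ 0\leq j\leq N-1,\ (i,j)\neq(0,0)\}$, so the left-hand side is already in the form displayed in \eqref{eqn:hypergeometric_formula-M=N}, and only the bracketed expression on the right needs to be rewritten.

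For that, I would first observe that $(-1)^{M-1}=(-1)^{N-1}$, that $\binom{M+N}{M}\delta_{n-M-N}=\binom{2N}{N}\delta_{n-2N}$, and that the sign attached to the last sum becomes $(-1)^{n-M-N}=(-1)^{n-2N}=(-1)^n$. Finally, the two single sums
$$\sum_{j=0}^{N-1}\binom{n}{N;j}B_{N,n-N-j}\quad\text{and}\quad\sum_{i=0}^{N-1}\binom{n}{N;i}B_{N,n-N-i}$$
are equal after relabeling the summation variable, so together they contribute $(1+(-1)^n)\sum_{j=0}^{N-1}\binom{n}{N;j}B_{N,n-N-j}$. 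Substituting all of this into \eqref{eq:hypergeometric-bernoulli} yields \eqref{eqn:hypergeometric_formula-M=N}.

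I do not expect any real obstacle here, since the corollary is a direct specialization; the only thing worth a quick sanity check is that the outer double sum on the left does not collapse further when $M=N$. It does not: in the summand $\sum_{k}(-1)^k\binom{n}{j;i;k-i}B_{N,n-k-j}B_{N,k-i}$ the two hypergeometric Bernoulli factors play asymmetric roles (one carries the shift $j$ and the other the shift $i$), so the terms indexed by $(i,j)$ and $(j,i)$ need not agree, and \eqref{eqn:hypergeometric_formula-M=N} correctly keeps the full double sum. As an alternative, one could reprove the corollary from scratch by repeating the proof of Theorem~\ref{th:hypergeometric-bernoulli} with $a=0$, $b=1$, and $M=N$ in \eqref{eq:convolutiontwovalues}, expanding both bracketed factors via Lemma~\ref{le:formula-B_N} and collecting the resulting summations; in this route the factor $1+(-1)^n$ appears directly because the two mixed single sums already coincide when $M=N$.
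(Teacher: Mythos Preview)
Your proposal is correct and matches the paper's own treatment: the paper presents this corollary as an immediate specialization of Theorem~\ref{th:hypergeometric-bernoulli} with $M=N$ and gives no separate proof, relying precisely on the substitutions and the merging of the two identical single sums into $(1+(-1)^n)\sum_{j=0}^{N-1}\binom{n}{N;j}B_{N,n-N-j}$ that you spell out.
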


\noindent Observe that if $n$ is odd, then both sides of \eqref{eqn:hypergeometric_formula-M=N} vanish.  This is clear for the right-hand side because of the delta term and the sign alternation.  As for the left-hand side, this follows from the fact that the $k$-th term of the inner summation corresponding to $(i,j)$ cancels with the $(n-k)$-th term corresponding to $(j,i)$.

In particular, setting $N=2$ in Corollary \ref{th:hypergeometric-bernoulli-M=N} yields

    \begin{corollary} \label{cor:N=M=2} For any even integer $n\geq 0$, we have
        \begin{equation}
        \label{eq:N=M=2}
        \sum_{k=0}^{n} (-1)^k\binom{n}{k}B_{2,n-k}(x)B_{2,k}(x) =\frac{1}{2}\delta_{n-2}+nB_{2,n-1}+B_{2,n}
        \end{equation}
    \end{corollary}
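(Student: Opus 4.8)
The plan is to specialize Corollary~\ref{th:hypergeometric-bernoulli-M=N} to $N=2$ and simplify, the nontrivial point being the identification of the surviving terms. First recall that by Theorem~\ref{th:convolution} (with $M=N=2$) the alternating convolution $\sum_{k=0}^{n}(-1)^k\binom{n}{k}B_{2,n-k}(x)B_{2,k}(x)$ equals a constant $c_n$ independent of $x$, and evaluating at $x=0$ gives $c_n=\sum_{k=0}^{n}(-1)^k\binom{n}{k}B_{2,n-k}B_{2,k}$. Hence it suffices to prove $c_n=\tfrac12\delta_{n-2}+nB_{2,n-1}+B_{2,n}$ for every even $n\ge 0$.

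Next I would write out the left-hand side of \eqref{eqn:hypergeometric_formula-M=N} for $N=2$: the outer sum runs over the three pairs $(i,j)\in\{(0,1),(1,0),(1,1)\}$. Since $\binom{n}{1;0;m}=\binom{n}{0;1;m}=n\binom{n-1}{m}$, a shift of summation index $k\mapsto k+1$ in the $(1,0)$--inner sum shows it equals the negative of the $(0,1)$--inner sum, so these two contributions cancel. For $(i,j)=(1,1)$ one has $\binom{n}{1;1;m}=n(n-1)\binom{n-2}{m}$, and the same shift $k\mapsto k+1$ turns the inner sum into $-n(n-1)\sum_{m=0}^{n-2}(-1)^m\binom{n-2}{m}B_{2,n-2-m}B_{2,m}=-n(n-1)c_{n-2}$. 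Thus the whole left-hand side of \eqref{eqn:hypergeometric_formula-M=N} for $N=2$ equals $-n(n-1)c_{n-2}$.

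Then I would simplify the right-hand side of \eqref{eqn:hypergeometric_formula-M=N} for $N=2$ and $n$ even, where $1+(-1)^n=2$, $\binom{2N}{N}=6$, $\binom{n}{2;0}=\binom{n}{2}$ and $\binom{n}{2;1}=\tfrac{n(n-1)(n-2)}{2}$: it equals $-\bigl[\,6\delta_{n-4}+n(n-1)B_{2,n-2}+n(n-1)(n-2)B_{2,n-3}\bigr]$. Equating this with $-n(n-1)c_{n-2}$ and dividing by $-n(n-1)$ (legitimate since here $n$ is an even integer $\ge 2$) gives $c_{n-2}=\tfrac{6}{n(n-1)}\delta_{n-4}+B_{2,n-2}+(n-2)B_{2,n-3}$. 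The factor $\tfrac{6}{n(n-1)}$ is only evaluated at $n=4$, where it equals $\tfrac12$, so $\tfrac{6}{n(n-1)}\delta_{n-4}=\tfrac12\delta_{n-4}$. Replacing $n$ by $n+2$ (so that $n$ now ranges over the even integers $\ge 0$) yields $c_n=\tfrac12\delta_{n-2}+B_{2,n}+nB_{2,n-1}$, which is \eqref{eq:N=M=2}; the case $n=0$ is covered as well and may be checked directly since $c_0=B_{2,0}(x)^2=1=\tfrac12\delta_{-2}+B_{2,0}$.

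The step I expect to be the main obstacle is purely the bookkeeping: expanding the outer sum over $(i,j)$, carrying the multinomial coefficients through the index shifts, and tracking the two $\delta$-terms — in particular recognizing that the coefficient $\tfrac{6}{n(n-1)}$ multiplying $\delta_{n-4}$ collapses to the constant $\tfrac12$ rather than leaving a spurious $n$-dependent factor. A less economical alternative, which bypasses Corollary~\ref{th:hypergeometric-bernoulli-M=N} entirely, would be to compute $c_n$ directly from $c_n=\sum_{k=0}^n(-1)^k\binom{n}{k}B_{2,n-k}(1)B_{2,k}(1)$ using Lemma~\ref{le:formula-B_N} with $N=2$, but this leads to a nine-term expansion and is messier to organize.
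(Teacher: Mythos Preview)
Your proposal is correct and follows essentially the same route as the paper: specialize Corollary~\ref{th:hypergeometric-bernoulli-M=N} to $N=2$, expand the left-hand side over the three pairs $(i,j)$, reduce the $(1,1)$ contribution to $-n(n-1)c_{n-2}$, simplify the right-hand side, divide, and re-index. The only cosmetic difference is your handling of the $(0,1)$ and $(1,0)$ pairs: you observe directly that the index shift $k\mapsto k+1$ makes them negatives of each other, whereas the paper combines them into $2n\sum_{k=0}^{n-1}(-1)^k\binom{n-1}{k}B_{2,n-1-k}B_{2,k}$ and then invokes the vanishing of the odd alternating convolution $c_{n-1}$; either way the contribution is zero and the rest of the argument is identical.
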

    
\begin{proof}
Assume $n$ is even.  Then substituting $N=2$ into \eqref{eqn:hypergeometric_formula-M=N} yields

\begin{dmath*}
\sum_{k=0}^n (-1)^k \binom{n}{1;0;k} B_{2,n-k-1} B_{2,k} +\sum_{k=0}^n (-1)^k \binom{n}{0;1;k-1} B_{2,n-k} B_{2,k-1} + \sum_{k=0}^n (-1)^k \binom{n}{1;1;k-1} B_{2,n-k-1} B_{2,k-1} = - \binom{4}{2}\delta_{n-4}-2\sum_{j=0}^{1}  \binom{n}{2; j}B_{2,n-2-j}
\end{dmath*}
which simplifies to
\begin{dmath} \label{eq:convolution-M=N=2}
2n\sum_{k=0}^{n-1} (-1)^k \binom{n-1}{k} B_{2,n-k-1} B_{2,k} +n(n-1)\sum_{k=1}^{n-1} (-1)^k \binom{n-2}{k-1} B_{2,n-k-1} B_{2,k-1} =-6\delta_{n-4}-2\sum_{j=0}^{1}  \binom{n}{2; j}B_{2,n-2-j}
\end{dmath}
where we have used the fact that $B_{2,k}=0$ for $k<0$.  Next, observe that the first summation in \eqref{eq:convolution-M=N=2} is an alternating convolution, which vanishes since $n-1$ is odd.  The second summation can also be rewritten as an alternating convolution after re-indexing $k$.  This leads to
$$
n(n-1)\sum_{k=0}^{n-2} (-1)^k \binom{n-2}{k} B_{2,n-k-2} B_{2,k} =6\delta_{n-4}+n(n-1)\sum_{j=0}^{1}  \binom{n-2}{j}B_{2,n-2-j}
$$
and gives a formula for $c_n$ if we re-index $n$:
$$
c_n=\sum_{k=0}^{n} (-1)^k \binom{n}{k} B_{2,n-k} B_{2,k} =\frac{6}{(n+1)(n+2)}\delta_{n-2}+\sum_{j=0}^{1}  \binom{n}{ j}B_{2,n-j}
$$
This completes the proof since $6\delta_{n-2}/[(n+1)(n+2)] = \delta_{n-2}/2$.
\end{proof}

As a corollary, we now derive identities for sums of products involving only the even $B_{2,2n}$ and separately for the odd $B_{2,2n-1}$.   Recall the following convolution formula due to Kamano \cite{kamano}:
    
\begin{theorem}[\cite{kamano}] Let $N$ be a positive integer.  Then
    \begin{equation}
    \label{eq:kamano}
    \sum_{k=0}^{n} \binom{n}{k}B_{N,n-k}B_{N,k} =-\frac{1}{N}[nB_{N,n-1}+(n-N)B_{N,n}]
    \end{equation}
\end{theorem}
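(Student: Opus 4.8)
The plan is to work with exponential generating functions. Write
$$f(t) = F_N(0,t) = \frac{t^N/N!}{e^t - T_{N-1}(t)} = \sum_{n\ge 0} B_{N,n}\,\frac{t^n}{n!},$$
and observe that the square of this series is precisely the generating function of the left-hand side of \eqref{eq:kamano}:
$$f(t)^2 = \sum_{n\ge 0}\left(\sum_{k=0}^n \binom{n}{k} B_{N,n-k}B_{N,k}\right)\frac{t^n}{n!}.$$
So it is enough to express $f(t)^2$ as a linear combination of $f(t)$ and $tf'(t)$ with polynomial coefficients, and then compare coefficients of $t^n/n!$.

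The key step is a first-order differential equation for $f$. Set $D(t) = e^t - T_{N-1}(t) = \sum_{j\ge N} t^j/j!$; a direct computation gives $D'(t) = D(t) + t^{N-1}/(N-1)!$. Taking the logarithmic derivative of $f = (t^N/N!)/D$ yields
$$\frac{f'(t)}{f(t)} = \frac{N}{t} - 1 - \frac{t^{N-1}/(N-1)!}{D(t)},$$
and since $1/D(t) = N!\,f(t)/t^N$, the last term collapses to $Nf(t)/t$. Hence $tf'(t) = Nf(t) - tf(t) - Nf(t)^2$, or equivalently
$$f(t)^2 = f(t) - \frac{t}{N}\bigl(f(t) + f'(t)\bigr).$$

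Finally I would extract the coefficient of $t^n/n!$ from both sides. On the left this is $\sum_{k=0}^n \binom{n}{k}B_{N,n-k}B_{N,k}$. On the right, the coefficient of $t^n/n!$ in $f$ is $B_{N,n}$, in $tf$ it is $nB_{N,n-1}$, and in $tf'$ it is $nB_{N,n}$ (the two index shifts must be tracked carefully). This gives
$$B_{N,n} - \frac{n}{N}B_{N,n-1} - \frac{n}{N}B_{N,n} = \frac{N-n}{N}B_{N,n} - \frac{n}{N}B_{N,n-1} = -\frac{1}{N}\bigl[nB_{N,n-1} + (n-N)B_{N,n}\bigr],$$
which is exactly \eqref{eq:kamano}. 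I do not expect a serious obstacle here: the only delicate point is the bookkeeping in the derivation of the differential equation — in particular recognizing that $t^{N-1}/\bigl((N-1)!\,D(t)\bigr)$ simplifies to $Nf(t)/t$ — together with keeping the index shifts straight when reading off the coefficients of $tf$ and $tf'$. An alternative would be to specialize $x_1 = x_2 = 0$ in the polynomial sum-of-products identity quoted at the end of the introduction, or to rerun the Appell-sequence argument of Theorem~\ref{th:convolution-appell} without the sign alternation; but the generating-function computation above is the most direct.
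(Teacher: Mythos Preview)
Your proof is correct. The paper itself cites this identity from Kamano and does not give a stand-alone proof at the point of citation; however, it later recovers the result as the $p=1$ case of its recurrence machinery (Lemma~\ref{le:partial-fraction} is replaced by the derivative expansion~\eqref{eq:partial-fraction-derivative}, leading to Lemma~19 and Theorem~\ref{th:sum-product-recurrence}, whose specialization in Example~(1) with $z=0$ is exactly Kamano's formula). Your argument is precisely that specialization carried out directly: the identity $D'(t)=D(t)+t^{N-1}/(N-1)!$ you use is the same as the paper's observation $A'-A=-t^{N-1}/(N-1)!$ with $A=T_{N-1}(t)$, and your differential equation $tf'=Nf-tf-Nf^2$ is the $p=1$, $x=0$ instance of \eqref{eq:egf-recurrence}. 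So the two approaches coincide in substance; you simply bypass the general $p$-fold product framework and go straight to the coefficient comparison, which is entirely appropriate for this single identity.
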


It follows from averaging equations \eqref{eq:N=M=2} and  \eqref{eq:kamano} above with $N=2$ that
$$
    \sum_{k=0 \ \mathrm{mod} \ 2}^{n} \binom{n}{k}B_{2,n-k}B_{2,k} =\frac{1}{4}[\delta_{n-2}+nB_{2,n-1}-(n-4)B_{2,n}]
$$
or equivalently,
\begin{corollary} \label{cor:N=M=2-even}
    \begin{equation} \label{eq:N=M=2-even}
    \sum_{k=0}^{n} \binom{2n}{2k}B_{2,2n-2k}B_{2,2k} =\frac{1}{4}[\delta_{2n-2}+2nB_{2,2n-1}-(2n-4)B_{2,2n}]
    \end{equation}
\end{corollary}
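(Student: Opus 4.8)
The plan is to follow exactly the recipe indicated just before the statement: average the two convolution formulas \eqref{eq:N=M=2} (from Corollary \ref{cor:N=M=2}) and \eqref{eq:kamano} (Kamano's formula), both specialized to $N=2$. First I would record the two identities side by side. Since the left-hand side of \eqref{eq:N=M=2} is independent of $x$, evaluating at $x=0$ gives, for every even $n\geq 0$,
$$
\sum_{k=0}^{n} (-1)^k\binom{n}{k}B_{2,n-k}B_{2,k} =\frac{1}{2}\delta_{n-2}+nB_{2,n-1}+B_{2,n},
$$
while Kamano's formula with $N=2$ reads
$$
\sum_{k=0}^{n} \binom{n}{k}B_{2,n-k}B_{2,k} =-\frac{1}{2}\left[nB_{2,n-1}+(n-2)B_{2,n}\right].
$$

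Next I would add these two equations. On the left-hand side the terms with odd index $k$ carry a factor $1+(-1)^k=0$ and hence cancel, while the terms with even $k$ are doubled; thus the combined left-hand side equals $2\sum_{k\ \mathrm{even}}\binom{n}{k}B_{2,n-k}B_{2,k}$. On the right-hand side a routine simplification collapses the $B_{2,n-1}$ and $B_{2,n}$ contributions into $\frac{1}{2}\delta_{n-2}+\frac{n}{2}B_{2,n-1}+\frac{4-n}{2}B_{2,n}$. Dividing through by $2$ yields
$$
\sum_{\substack{0\leq k\leq n\\ k\ \mathrm{even}}}\binom{n}{k}B_{2,n-k}B_{2,k}=\frac{1}{4}\left[\delta_{n-2}+nB_{2,n-1}-(n-4)B_{2,n}\right],
$$
valid for every even $n$, which is precisely the intermediate formula displayed before the statement of the corollary.

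Finally, since $n$ is even I would set $n=2m$ and re-index the sum via $k=2j$, converting the displayed identity into
$$
\sum_{j=0}^{m}\binom{2m}{2j}B_{2,2m-2j}B_{2,2j}=\frac{1}{4}\left[\delta_{2m-2}+2mB_{2,2m-1}-(2m-4)B_{2,2m}\right],
$$
and renaming $m$ as $n$ gives \eqref{eq:N=M=2-even}. There is essentially no genuine obstacle here; the proof is pure bookkeeping. The one point that warrants a moment's care is the range of validity: formula \eqref{eq:N=M=2} is asserted only for even $n$, whereas Kamano's formula holds for all $n$, so the average is legitimate exactly on the even integers — which is all that is needed, since after the substitution $n\mapsto 2n$ we only ever evaluate at even arguments.
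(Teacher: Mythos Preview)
Your proposal is correct and follows exactly the approach sketched in the paper: averaging \eqref{eq:N=M=2} (at $x=0$) with Kamano's formula \eqref{eq:kamano} for $N=2$, so that odd-index terms cancel and the even-index sum emerges, then substituting $n\mapsto 2n$. The bookkeeping and the remark on the parity restriction are accurate.
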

\noindent Observe that \eqref{eq:N=M=2-even} generalizes Euler's quadratic formula for the classical Bernoulli numbers:
    \[\sum\limits_{k = 0}^n \binom{2n}{2k} B_{2n - 2k} B_{2k}  = -(2n - 1) B_{2n}\]
    
    On the other hand, subtracting the same two equations above yields
$$
    \sum_{k=1 \ \mathrm{mod} \ 2}^{n} \binom{n}{k}B_{2,n-k}B_{2,k} =-\frac{1}{4}[\delta_{n-2}+3nB_{2,n-1}+nB_{2,n}]
$$
    or equivalently,
    \begin{corollary} \label{cor:N=M=2-odd}
    \begin{equation}
    \sum_{k=1}^{n} \binom{2n}{2k-1}B_{2,2n-2k+1}B_{2,2k-1} =-\frac{1}{4}[\delta_{2n-2}+6nB_{2,2n-1}+2nB_{2,2n}]
    \end{equation}
    \end{corollary}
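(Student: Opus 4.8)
The plan is to obtain this identity in exactly the same way Corollary~\ref{cor:N=M=2-even} was obtained, except by \emph{subtracting} the two relevant quadratic identities rather than averaging them. I would start from Corollary~\ref{cor:N=M=2} evaluated at $x=0$, namely
$$
\sum_{k=0}^{n} (-1)^k\binom{n}{k}B_{2,n-k}B_{2,k} =\tfrac{1}{2}\delta_{n-2}+nB_{2,n-1}+B_{2,n}
$$
(valid for even $n\geq 0$), together with Kamano's formula \eqref{eq:kamano} specialized to $N=2$,
$$
\sum_{k=0}^{n} \binom{n}{k}B_{2,n-k}B_{2,k} =-\tfrac{1}{2}\bigl[nB_{2,n-1}+(n-2)B_{2,n}\bigr].
$$
Since $(-1)^k=+1$ for $k$ even and $-1$ for $k$ odd, subtracting the first identity from the second and dividing by $2$ extracts precisely the odd-index part of the convolution:
$$
\sum_{\substack{0\le k\le n\\ k\ \mathrm{odd}}} \binom{n}{k}B_{2,n-k}B_{2,k}
= \tfrac12\Bigl[-\tfrac12\bigl(nB_{2,n-1}+(n-2)B_{2,n}\bigr)-\tfrac12\delta_{n-2}-nB_{2,n-1}-B_{2,n}\Bigr].
$$

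The next step is pure bookkeeping: collecting coefficients on the right-hand side, the $B_{2,n-1}$ contributions combine to $-\tfrac{3n}{4}$, the $B_{2,n}$ contributions to $-\tfrac{n}{4}$ (using $-\tfrac12(n-2)-1=-\tfrac n2$), and the delta term to $-\tfrac14$, so that
$$
\sum_{\substack{0\le k\le n\\ k\ \mathrm{odd}}} \binom{n}{k}B_{2,n-k}B_{2,k}
= -\tfrac14\bigl[\delta_{n-2}+3nB_{2,n-1}+nB_{2,n}\bigr],
$$
which is the displayed formula that precedes the corollary.

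Finally, I would specialize $n\mapsto 2n$ (permissible since the identity holds for every even value of the index) and re-index the sum: the odd integers $k\in\{1,3,\dots,2n-1\}$ are written as $k=2\ell-1$ with $\ell=1,\dots,n$, whereupon $\binom{2n}{k}=\binom{2n}{2\ell-1}$, $B_{2,2n-k}B_{2,k}=B_{2,2n-2\ell+1}B_{2,2\ell-1}$, and the coefficients $3n$ and $n$ become $6n$ and $2n$. Renaming $\ell$ to $k$ yields the stated formula.

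I do not anticipate a genuine obstacle here; the only points requiring care are the sign and coefficient bookkeeping when the two $B_{2,n}$ terms are merged, and the observation that one must restrict to even $n$ so that Corollary~\ref{cor:N=M=2} applies — a restriction that becomes automatic once the substitution $n\mapsto 2n$ is made.
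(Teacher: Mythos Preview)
Your proposal is correct and follows essentially the same route as the paper: the paper likewise subtracts Kamano's formula \eqref{eq:kamano} (with $N=2$) and Corollary~\ref{cor:N=M=2} to isolate the odd-index part of the convolution, obtains the intermediate identity $-\tfrac14[\delta_{n-2}+3nB_{2,n-1}+nB_{2,n}]$, and then rewrites it with $n\mapsto 2n$. Your bookkeeping is accurate and your remark about needing $n$ even (so that Corollary~\ref{cor:N=M=2} applies) is exactly right.
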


    \section{Exponential Partial Fraction Expansion}
    In this section we demonstrate that some of the identities obtained in the previous section can also be obtained from exponential generating functions defined for hypergeometric Bernoulli polynomials, which we recall from \eqref{eq:egf-hyper}:
$$
   F_N(x,t)\equiv \frac{(t^N/N!)e^{xt}}{e^t-T_{N-1}(t)}=\sum_{n=0}^{\infty}B_{N,n}(x)\frac{t^n}{n!}
$$    
This was achieved by taking advantage of elementary expansion formulas that hold for special partial fractions, in particular those that we refer to as {\em exponential partials fractions}.  We write out these formulas in the following lemma without proof.

\begin{lemma} \label{le:partial-fraction}   Let $A$ and $B$ be two quantities.  Then
\begin{flalign}
& \mathrm{(1)} \ \frac{1-AB}{(e^t-A)(e^{-t}-B)}=1+\frac{A}{e^t-A}+\frac{B}{e^{-t}-B} & \\
& \mathrm{(2)} \
    \frac{A-B}{(e^t-A)(e^{t}-B)}=\frac{1}{e^t-A}-\frac{1}{e^{t}-B} \label{eq:partia-fraction-2}
\end{flalign}
\end{lemma}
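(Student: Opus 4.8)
The plan is to treat both formulas as elementary identities of rational functions in the single quantity $u=e^{t}$ (with $e^{-t}=u^{-1}$): clear denominators and verify the resulting polynomial identity in $u$. I would work on the region where no denominator vanishes, i.e.\ $e^{t}\neq A$ and $e^{-t}\neq B$, and afterwards regard the results either as identities of meromorphic functions of $t$ or, equivalently, as identities of formal power series in $t$ once each simple fraction is expanded as a geometric series.

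For part (2) this is immediate: combining the right-hand side over the common denominator $(e^{t}-A)(e^{t}-B)$ produces the numerator $(e^{t}-B)-(e^{t}-A)=A-B$, which gives exactly the left-hand side.

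For part (1), I would first substitute $u=e^{t}$ and rewrite $e^{-t}-B=(1-Bu)/u$, so that the left-hand side becomes $u(1-AB)/[(u-A)(1-Bu)]$. Then I would bring the right-hand side $1+A/(u-A)+Bu/(1-Bu)$ over the same denominator $(u-A)(1-Bu)$ and expand the numerator $(u-A)(1-Bu)+A(1-Bu)+Bu(u-A)$: the quadratic terms $\pm Bu^{2}$ cancel, two of the three $ABu$ terms cancel, and what remains is $u-ABu=u(1-AB)$, matching the left-hand side. Alternatively, and a little more cleanly, one can avoid the substitution: write $A/(e^{t}-A)=e^{t}/(e^{t}-A)-1$ and $B/(e^{-t}-B)=e^{-t}/(e^{-t}-B)-1$, so the right-hand side equals $e^{t}/(e^{t}-A)+e^{-t}/(e^{-t}-B)-1$; combining the two remaining fractions over $(e^{t}-A)(e^{-t}-B)$ and using $e^{t}e^{-t}=1$ collapses the numerator to $(2-Be^{t}-Ae^{-t})-(e^{t}-A)(e^{-t}-B)=1-AB$, which is the claim.

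I do not expect any genuine obstacle: the computation is routine algebraic bookkeeping, and the only point deserving a sentence is the domain / formal-power-series caveat mentioned above. The mild "watch-out" is simply to keep careful track of signs when expanding the numerator in part (1) so that the $Bu^{2}$ and $ABu$ terms are visibly seen to cancel.
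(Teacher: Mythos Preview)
Your proposal is correct: both identities are elementary rational-function identities, and clearing denominators (or, equivalently, your cleaner rewrite $A/(e^{t}-A)=e^{t}/(e^{t}-A)-1$ and its companion) verifies them immediately. There is nothing to compare against, since the paper explicitly introduces this lemma ``without proof''; your argument is exactly the routine computation the authors had in mind and chose to omit.
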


Next, we express Lemma \ref{le:partial-fraction} in terms of the exponential generating function $F_N(x,t)$.

\begin{theorem} \label{th:HBP-partial-fraction} Let $M$ and $N$ be positive integers.
Then
\begin{flalign}
& \mathrm{(1)} \ [1-T_{N-1}(t)T_{M-1}(-t)]F_N(x,t)F_M(x,-t) & \notag \\
& \ \ \ \ \ \  =(-1)^M\frac{t^{M+N}}{M!N!}+(-1)^M\frac{t^M}{M!}T_{N-1}(t)F_N(0,t)+\frac{t^N}{N!}T_{M-1}(-t)F_M(0,-t) & \label{eq:HBP-partial-fraction} \\
& \mathrm{(2)} \ [T_{N-1}(t)-T_{M-1}(t)]F_N(x_1,t)F_M(x_2,t)=\frac{t^M}{M!}F_N(x,t)-\frac{t^N}{N!}F_M(x,t) &  \label{eq:HBP-partial-fraction2}
\end{flalign}

where $x=x_1+x_2$.
\end{theorem}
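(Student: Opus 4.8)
The plan is to read Theorem~\ref{th:HBP-partial-fraction} off from Lemma~\ref{le:partial-fraction} by substituting the appropriate exponential generating functions for the elementary fractions $\tfrac{1}{e^{t}-A}$ and then clearing denominators. The one identity I need at the outset is the reformulation of \eqref{eq:egf-hyper}: since $F_N(0,t)=\dfrac{t^{N}/N!}{e^{t}-T_{N-1}(t)}$, we have
\[
\frac{1}{e^{t}-T_{N-1}(t)}=\frac{N!\,F_N(0,t)}{t^{N}},
\qquad
\frac{1}{e^{-t}-T_{M-1}(-t)}=\frac{M!\,F_M(0,-t)}{(-t)^{M}}=(-1)^{M}\,\frac{M!\,F_M(0,-t)}{t^{M}},
\]
the second equality coming from the first with $t$ replaced by $-t$ and $N$ by $M$. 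I will also use the translation property $F_N(x,t)=e^{xt}F_N(0,t)$, which is immediate from the shape of \eqref{eq:egf-hyper}; in particular $F_N(x,t)F_M(x,-t)=F_N(0,t)F_M(0,-t)$ is independent of $x$, which is the generating-function counterpart of Theorem~\ref{th:convolution-appell}.

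For part~(1), I would apply Lemma~\ref{le:partial-fraction}(1) with $A=T_{N-1}(t)$ and $B=T_{M-1}(-t)$, so that the two denominators on the left-hand side become $e^{t}-T_{N-1}(t)$ and $e^{-t}-T_{M-1}(-t)$. Substituting the two reciprocals above turns the left side into $[1-T_{N-1}(t)T_{M-1}(-t)]\cdot(-1)^{M}\dfrac{M!N!}{t^{M+N}}F_N(0,t)F_M(0,-t)$ and the right side into $1+\dfrac{N!}{t^{N}}T_{N-1}(t)F_N(0,t)+(-1)^{M}\dfrac{M!}{t^{M}}T_{M-1}(-t)F_M(0,-t)$. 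Multiplying both sides by $(-1)^{M}\dfrac{t^{M+N}}{M!N!}$ and replacing $F_N(0,t)F_M(0,-t)$ by $F_N(x,t)F_M(x,-t)$ gives \eqref{eq:HBP-partial-fraction} directly.

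For part~(2), I would apply Lemma~\ref{le:partial-fraction}(2) with $A=T_{N-1}(t)$ and $B=T_{M-1}(t)$, substitute $\dfrac{1}{e^{t}-T_{N-1}(t)}=\dfrac{N!F_N(0,t)}{t^{N}}$ and $\dfrac{1}{e^{t}-T_{M-1}(t)}=\dfrac{M!F_M(0,t)}{t^{M}}$, and multiply through by $\dfrac{t^{M+N}}{M!N!}$ to obtain $[T_{N-1}(t)-T_{M-1}(t)]F_N(0,t)F_M(0,t)=\dfrac{t^{M}}{M!}F_N(0,t)-\dfrac{t^{N}}{N!}F_M(0,t)$. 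Multiplying this last equation by $e^{xt}$ with $x=x_1+x_2$ and distributing the exponential via $e^{x_1t}F_N(0,t)=F_N(x_1,t)$, $e^{x_2t}F_M(0,t)=F_M(x_2,t)$, $e^{xt}F_N(0,t)=F_N(x,t)$, $e^{xt}F_M(0,t)=F_M(x,t)$ yields \eqref{eq:HBP-partial-fraction2}.

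The computation is essentially bookkeeping; the only place to be careful is the sign $(-1)^{M}$ that enters through $(-t)^{M}$ when passing from $e^{-t}-T_{M-1}(-t)$ to $F_M(0,-t)$ in part~(1), and the choice of where to insert the factor $e^{xt}$ in part~(2) so that the right-hand side collapses to the single variable $x=x_1+x_2$ while the left-hand side keeps $x_1$ and $x_2$ separate. I do not expect any genuine obstacle beyond tracking these signs and exponentials.
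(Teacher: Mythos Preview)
Your proposal is correct and follows essentially the same approach as the paper: both proofs apply Lemma~\ref{le:partial-fraction} with the substitutions $A=T_{N-1}(t)$, $B=T_{M-1}(-t)$ for part~(1) and $A=T_{N-1}(t)$, $B=T_{M-1}(t)$ for part~(2), then clear denominators using the defining relation \eqref{eq:egf-hyper}. The only cosmetic difference is the direction of the computation---the paper starts from $F_N F_M$ and rewrites it as an elementary fraction before invoking the lemma, whereas you start from the lemma and substitute the reciprocals $\tfrac{1}{e^{t}-T_{N-1}(t)}=\tfrac{N!\,F_N(0,t)}{t^{N}}$---and your explicit use of $F_N(x,t)=e^{xt}F_N(0,t)$ to handle the $x$-dependence, which the paper leaves implicit.
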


\begin{proof}
For (1), set $A=T_{N-1}(t)$ and $B=T_{M-1}(-t)$.  Then
\begin{align}
& [1-T_{N-1}(t)T_{M-1}(-t)]F_N(x,t)F_M(x,-t) \notag \\
& \hspace{50pt}  = (-1)^M (t^N/N!)(t^M/M!)\frac{1-AB}{(e^t-A)(e^{-t}-B)} \notag \\
& \hspace{50pt}  = (-1)^M \frac{t^{M+N}}{M!N!} \left(1+\frac{A}{e^t-A}+\frac{B}{e^{-t}-B} \right) \\
& \hspace{50pt} =(-1)^M\frac{t^{M+N}}{M!N!}+(-1)^M\frac{t^M}{M!}T_{N-1}(t)F_N(0,t)+\frac{t^N}{N!}T_{M-1}(-t)F_M(0,-t)
\end{align}
For (2), set $A=T_{N-1}(t)$ and $B=T_{M-1}(t)$.  Then
\begin{align}
[T_{N-1}(t)-T_{M-1}(t)]F_N(x_1,t)F_M(x_2,t) 
& = (t^N/N!)(t^M/M!)e^{(x_1+x_2)t}\frac{A-B}{(e^t-A)(e^{t}-B)} \notag \\
& = \frac{t^{M+N}}{M!N!}e^{(x_1+x_2)t} \left(\frac{1}{e^t-A}-\frac{1}{e^{t}-B} \right) \\
& =\frac{t^M}{M!}F_N(x,t)-\frac{t^N}{N!}F_M(x,t) 
\end{align}
\end{proof}

Define $a_m$ to be coefficients of the polynomial
$$1-T_{N-1}(t)T_{M-1}(-t)= \sum_{m=1}^{M+N-2}a_mt^m$$
We are ready to prove our first main result in this section.

\begin{theorem}  \label{th:HBP-egf} For positive integers $M$ and $N$, we have
\begin{flalign} 
& \mathrm{(1)} \ \sum_{m=1}^{M+N-2} m! a_m \sum_{k=0}^{n-m} (-1)^k \binom{n}{m; k} B_{N,n-m-k}(x)B_{M,k}(x)
 = (-1)^M\binom{M+N}{M}\delta_{n-M-N} & \notag  \\ 
& \hspace{50pt} + (-1)^M\sum_{m=0}^{N-1} \binom{n}{m; M} B_{N,n-m-M} +
(-1)^{n-N}\sum_{m=0}^{M-1}  \binom{n}{m; N}  B_{M,n-m-N} & \label{eq:HBP-egf}
\end{flalign}
and if $N\geq M$,
\begin{flalign}
& \mathrm{(2)} \ \sum_{m=M}^{N-1} \sum_{k=0}^{n-m}  \binom{n}{m; k} B_{N,n-m-k}(x_1)B_{M,k}(x_2) 
= \binom{n}{M} B_{N,n-M}(x) - \binom{n}{N}B_{M,n-N}(x) &  \label{eq:HBP-egf2}
\end{flalign}

\end{theorem}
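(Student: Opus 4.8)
The plan is to prove both identities by extracting the coefficient of $t^n/n!$ from the two generating-function identities in Theorem~\ref{th:HBP-partial-fraction}. For part (1), I would start from \eqref{eq:HBP-partial-fraction} and expand each factor on the left as a power series: write $1-T_{N-1}(t)T_{M-1}(-t)=\sum_{m=1}^{M+N-2}a_mt^m$ by definition of the $a_m$, and form the Cauchy product
\[
F_N(x,t)F_M(x,-t)=\sum_{n=0}^{\infty}\left(\sum_{k=0}^n(-1)^k\binom{n}{k}B_{N,n-k}(x)B_{M,k}(x)\right)\frac{t^n}{n!}.
\]
Multiplying these two series, collecting the coefficient of $t^n$, and multiplying by $n!$, the key computational fact is the identity $\frac{n!}{(n-m)!}\binom{n-m}{k}=m!\binom{n}{m;k}$, which converts the coefficient of $t^n/n!$ on the left-hand side into exactly $\sum_{m=1}^{M+N-2}m!\,a_m\sum_{k=0}^{n-m}(-1)^k\binom{n}{m;k}B_{N,n-m-k}(x)B_{M,k}(x)$, the left side of \eqref{eq:HBP-egf}.

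For the right-hand side of \eqref{eq:HBP-partial-fraction} I would treat each of the three terms separately. The term $(-1)^M t^{M+N}/(M!N!)$ contributes $(-1)^M\binom{M+N}{M}\delta_{n-M-N}$ to the coefficient of $t^n/n!$, since $(M+N)!/(M!N!)=\binom{M+N}{M}$. For $(-1)^M(t^M/M!)T_{N-1}(t)F_N(0,t)$, I would write $T_{N-1}(t)F_N(0,t)=\sum_{n}\Big(\sum_{j=0}^{N-1}\binom{n}{j}B_{N,n-j}\Big)t^n/n!$ (the convention $B_{N,k}=0$ for $k<0$ making the truncation automatic), multiply by $t^M/M!$, and apply the same multinomial reduction to obtain $(-1)^M\sum_{j=0}^{N-1}\binom{n}{M;j}B_{N,n-M-j}$. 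The term $(t^N/N!)T_{M-1}(-t)F_M(0,-t)$ is handled identically; the extra sign $(-1)^{n-N}$ arises from the $-t$ arguments, giving $(-1)^{n-N}\sum_{i=0}^{M-1}\binom{n}{N;i}B_{M,n-N-i}$. Matching the two sides yields \eqref{eq:HBP-egf}.

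For part (2), I would apply the same coefficient extraction to \eqref{eq:HBP-partial-fraction2}. Since $N\geq M$, the polynomial $T_{N-1}(t)-T_{M-1}(t)$ equals $\sum_{m=M}^{N-1}t^m/m!$ (which is empty, hence the statement trivial, when $M=N$), so the Cauchy product of this with $F_N(x_1,t)F_M(x_2,t)=\sum_n\Big(\sum_k\binom{n}{k}B_{N,n-k}(x_1)B_{M,k}(x_2)\Big)t^n/n!$ produces, via $\frac{n!}{m!(n-m)!}\binom{n-m}{k}=\binom{n}{m;k}$, the left-hand side of \eqref{eq:HBP-egf2}. On the right, $(t^M/M!)F_N(x,t)$ and $-(t^N/N!)F_M(x,t)$ contribute $\binom{n}{M}B_{N,n-M}(x)$ and $-\binom{n}{N}B_{M,n-N}(x)$ respectively, with $x=x_1+x_2$ exactly as in Theorem~\ref{th:HBP-partial-fraction}.

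No step presents a genuine obstacle: once the generating-function identities of Theorem~\ref{th:HBP-partial-fraction} are in hand, the argument is entirely formal. The only thing requiring care is the bookkeeping of indices and the systematic use of the reduction $\frac{n!}{(n-m)!}\binom{n-m}{k}=m!\binom{n}{m;k}$ (and its $k\mapsto$ general analogue $\frac{n!}{m!(n-m)!}\binom{n-m}{k}=\binom{n}{m;k}$), together with the convention $B_{N,k}=0$ for $k<0$ so that every finite sum is correctly truncated. I would state this multinomial identity once at the outset and invoke it repeatedly.
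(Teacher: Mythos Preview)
Your proposal is correct and follows essentially the same route as the paper: both derive \eqref{eq:HBP-egf} and \eqref{eq:HBP-egf2} by extracting the coefficient of $t^n/n!$ from the two identities of Theorem~\ref{th:HBP-partial-fraction}, forming the Cauchy products $F_N(x,t)F_M(x,-t)$ and $F_N(x_1,t)F_M(x_2,t)$, and then converting $\frac{n!}{(n-m)!}\binom{n-m}{k}$ into $m!\binom{n}{m;k}$. Your explicit isolation of that multinomial reduction and of the sign bookkeeping for the $T_{M-1}(-t)F_M(0,-t)$ term is exactly what the paper does implicitly through its re-indexing steps.
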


\begin{proof}
To prove \eqref{eq:HBP-egf}, we begin with the left-hand side of \eqref{eq:HBP-partial-fraction} which simplifies to
\begin{align}
& [1-T_{N-1}(t)T_{M-1}(-t)]F_N(x,t)F_M(x,-t) \\
& = \left(\sum_{m=1}^{M+N-2}a_mt^m\right) \sum_{n=0}^{\infty} \left( \sum_{k=0}^n (-1)^k \binom{n}{k} B_{N,n-k}(x)B_{M,k}(x)\right) \frac{t^n}{n!} \notag \\
& =  \sum_{m=1}^{M+N-2} a_m \sum_{n=0}^{\infty} \sum_{k=0}^n (-1)^k \binom{n}{k} B_{N,n-k}(x)B_{M,k}(x) \frac{t^{n+m}}{n!} \notag \\
& = \sum_{n=0}^{\infty}\left( \sum_{m=1}^{M+N-2} m! a_m \sum_{k=0}^{n-m} (-1)^k \binom{n}{m; k} B_{N,n-m-k}(x)B_{M,k}(x) \right) \frac{t^{n}}{n!}   \label{eq:HBP-egf-result1} 
\end{align}
On the other, the right-hand side of \eqref{eq:HBP-partial-fraction} simplifies to
\begin{align}
&(-1)^M\frac{t^{M+N}}{M!N!}+(-1)^M\frac{t^M}{M!}T_{N-1}(t)F_N(0,t)+\frac{t^N}{N!}T_{M-1}(-t)F_M(0,-t) \notag \\
& =  \sum_{n=0}^{\infty}(-1)^M\binom{M+N}{M}\delta_{n-M-N}\frac{t^n}{n!}+T_{N-1}(t)\sum_{n=0}^{\infty}(-1)^M \binom{n+M}{M} B_{N,n}\frac{t^{n+M}}{(n+M)!} \notag \\
& \ \ \ \ \ +T_{M-1}(-t)\sum_{n=0}^{\infty}(-1)^n\binom{n+N}{N}B_{M,n}\frac{t^{n+N}}{(n+N)!} \notag \\
& =  \sum_{n=0}^{\infty}(-1)^M\binom{M+N}{M}\delta_{n-M-N}\frac{t^n}{n!}+\left(\sum_{m=0}^{N-1}\frac{t^m}{m!}\right)\sum_{n=M}^{\infty}(-1)^M \binom{n}{M} B_{N,n-M}\frac{t^{n}}{n!} \notag \\
& \ \ \ \ \ +\left(\sum_{m=0}^{M-1}(-1)^m\frac{t^m}{m!}\right)\sum_{n=N}^{\infty}(-1)^{n-N}\binom{n}{N}B_{M,n-N}\frac{t^{n}}{n!} \label{eq:HBP-partial-fraction-step1}
\end{align}
Now use the fact that $B_{N,n}=0$ for $n<0$ to rewrite \eqref{eq:HBP-partial-fraction-step1} as
\begin{align}
&(-1)^M\frac{t^{M+N}}{M!N!}+(-1)^M\frac{t^M}{M!}T_{N-1}(t)F_N(0,t)+\frac{t^N}{N!}T_{M-1}(-t)F_M(0,-t) \notag \\
& =  \sum_{n=0}^{\infty}(-1)^M\binom{M+N}{M}\delta_{n-M-N}\frac{t^n}{n!}+\sum_{n=M}^{\infty}\left[\sum_{m=0}^{N-1} (-1)^M \binom{n+N-1}{n+m} \binom{n+m}{M} B_{N,n+m-M}\right]\frac{t^{n+N-1}}{(n+N-1)!} \notag \\
& \ \ \ \ \ +\sum_{n=N}^{\infty} \left[\sum_{m=0}^{M-1} (-1)^{n+M-1-N} \binom{n+M-1}{n+m} \binom{n+m}{N} B_{M,n+m-N}\right]\frac{t^{n+M-1}}{(n+M-1)!} \notag \\
& =  \sum_{n=0}^{\infty}(-1)^M\binom{M+N}{M}\delta_{n-M-N}\frac{t^n}{n!}+\sum_{n=M+N-1}^{\infty}\left[\sum_{m=0}^{N-1} (-1)^M \binom{n}{N-m-1;M} B_{N,n-N+1+m-M}\right]\frac{t^{n}}{n!} \notag \\
& \ \ \ \ \ +\sum_{n=M+N-1}^{\infty} \left[\sum_{m=0}^{M-1} (-1)^{n-N}  \binom{n}{M-m-1;N}  B_{M,n-M+1+m-N}\right]\frac{t^{n}}{n!}
 \label{eq:HBP-egf-result2}
\end{align}
Then re-index the last two summations in \eqref{eq:HBP-egf-result2} and equate it with \eqref{eq:HBP-egf-result1} to obtain \eqref{eq:HBP-egf}  as desired. 

We next prove \eqref{eq:HBP-egf2} similarly. The left-hand side of \eqref{eq:HBP-partial-fraction2} simplifies to
\begin{align}
& [T_{N-1}(t)-T_{M-1}(t)]F_N(x_1,t)F_M(x_2,t) \\
& \hspace{50pt} = \left(\sum_{m=M}^{N-1}\frac{t^m}{m!}\right) \sum_{n=0}^{\infty} \left( \sum_{k=0}^n \binom{n}{k} B_{N,n-k}(x_1)B_{M,k}(x_2)\right) \frac{t^n}{n!} \notag \\
& \hspace{50pt} =  \sum_{m=M}^{N-1} \frac{1}{m!} \sum_{n=0}^{\infty} \sum_{k=0}^n \binom{n}{k} B_{N,n-k}(x_1)B_{M,k}(x_2) \frac{t^{n+m}}{n!} \notag \\
& \hspace{50pt} = \sum_{n=0}^{\infty}\left( \sum_{m=M}^{N-1} \sum_{k=0}^{n-m} \binom{n}{m; k} B_{N,n-m-k}(x_1)B_{M,k}(x_2) \right) \frac{t^{n}}{n!}   \label{eq:HBP-egf2-result1} 
\end{align}
As for the right-hand side of \eqref{eq:HBP-partial-fraction2}, we have
\begin{align}
& \frac{t^M}{M!}F_N(x,t)-\frac{t^N}{N!}F_M(x,t) \notag \\
& \hspace{50pt} = \sum_{n=0}^{\infty} B_{N,n}(x)\frac{t^{n+M}}{M! n!} - \sum_{n=0}^{\infty}B_{M,n}(x)\frac{t^{n+N}}{N!n!} \notag \\
&
\hspace{50pt} =  \sum_{n=M}^{N-1} \binom{n}{M} B_{N,n-M}(x)\frac{t^{n}}{n!} +\sum_{n=N}^{\infty}\left[ \binom{n}{M} B_{N,n-M}(x) - \binom{n}{N}B_{M,n-N}(x)\right]\frac{t^{n}}{n!} 
 \label{eq:HBP-egf2-result2}
\end{align}
Equation \eqref{eq:HBP-egf2} now follows by assuming $N \geq M$ and equating \eqref{eq:HBP-egf2-result1} with \eqref{eq:HBP-egf2-result2}. 
\end{proof}

We note that part (1) of Theorem \ref{th:HBP-egf} is equivalent to Theorem  \ref{eq:hypergeometric-bernoulli} since the right-hand sides of \eqref{eq:hypergeometric-bernoulli} and \eqref{eq:HBP-egf} are identical (up to sign).  For example, if $M=1$ and $N=2$, then it can be shown that \eqref{eq:HBP-egf} and \eqref{eq:HBP-egf2} reduces to \eqref{eqn:hypergeometric_formulaM=1andN=2} and \eqref{eqn:hypergeometric_formulaM=1andN=2version2}, respectively.  If $M=N=2$, then \eqref{eq:HBP-egf} reduces to \eqref{eq:N=M=2}, but \eqref{eq:HBP-egf2} becomes trivial.  This is because both sides of \eqref{eq:HBP-egf2} vanishes when $M=N$.  To obtain non-trivial identities in this case, it is necessary to replace \eqref{eq:partia-fraction-2} with the following formula by applying Leibniz's rule for derivatives:
\begin{align}
\frac{d}{dt}\left[\frac{e^{xt}}{(e^t-A)^p}\right] & = e^{xt}\frac{d}{dt}\left[\frac{1}{(e^t-A)^p}\right] +x\frac{e^{xt}}{(e^t-A)^p} \notag \\
& =-pe^{xt}\frac{e^t-A'}{(e^t-A)^{p+1}} +x\frac{e^{xt}}{(e^t-A)^p} \notag \\
& =p(A'-A)\frac{e^{xt}}{(e^t-A)^{p+1}} +(x-p)\frac{e^{xt}}{(e^t-A)^p} \label{eq:partial-fraction-derivative}
\end{align}

This leads to the following result.  
\begin{lemma} Suppose $z=x_1+x_2+...+x_{p+1}=y_1+y_2+...+y_p$.  Then
\begin{equation} \label{eq:egf-recurrence}
\prod_{k=1}^{p+1}F_N(x_k,t) = \left[\frac{(z-p)t}{pN}+1\right]\prod_{k=1}^{p}F_N(y_k,t)-\left(\frac{t}{pN}\right)\frac{d}{dt}\left[\prod_{k=1}^{p}F_N(y_k,t)\right] \\
\end{equation}
\end{lemma}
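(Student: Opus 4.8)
The plan is to reduce the identity to the single-variable derivative formula \eqref{eq:partial-fraction-derivative}, taking advantage of the fact that a product $\prod_k F_N(x_k,t)$ depends on the arguments only through their sum. Write $P(t)=e^t-T_{N-1}(t)$, so that $F_N(x,t)=\frac{t^N}{N!}\cdot\frac{e^{xt}}{P(t)}$, and set
\[
G_p(t)=\left(\frac{t^N}{N!}\right)^{p}\frac{e^{zt}}{P(t)^{p}}.
\]
Since $z=x_1+\dots+x_{p+1}=y_1+\dots+y_p$ and $e^{x_1t}\cdots e^{x_{p+1}t}=e^{zt}=e^{y_1t}\cdots e^{y_pt}$, we have $\prod_{k=1}^{p}F_N(y_k,t)=G_p(t)$ and $\prod_{k=1}^{p+1}F_N(x_k,t)=G_{p+1}(t)$, so \eqref{eq:egf-recurrence} is exactly the claim $G_{p+1}=\left[1+\frac{(z-p)t}{pN}\right]G_p-\frac{t}{pN}G_p'$.

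First I would record the elementary identity $P'(t)=e^t-T_{N-2}(t)=P(t)+\frac{t^{N-1}}{(N-1)!}$, which holds because $T_{N-1}'(t)=T_{N-2}(t)$ and $T_{N-1}(t)=T_{N-2}(t)+\frac{t^{N-1}}{(N-1)!}$. Applying \eqref{eq:partial-fraction-derivative} with $A=T_{N-1}(t)$ (so that $A'=T_{N-2}(t)$, $A'-A=-\frac{t^{N-1}}{(N-1)!}$, and $e^t-A=P(t)$) and $x=z$ then yields
\[
\frac{d}{dt}\left[\frac{e^{zt}}{P(t)^p}\right]=(z-p)\frac{e^{zt}}{P(t)^p}-\frac{p\,t^{N-1}}{(N-1)!}\cdot\frac{e^{zt}}{P(t)^{p+1}}.
\]

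Next I would differentiate $G_p(t)=\frac{t^{Np}}{(N!)^p}\cdot\frac{e^{zt}}{P(t)^p}$ by the product rule and substitute the previous display. Differentiating the factor $t^{Np}$ produces $\frac{Np}{t}\,G_p(t)$; the $(z-p)$-term produces $(z-p)\,G_p(t)$; and the remaining contribution $\frac{t^{Np}}{(N!)^p}\cdot\frac{p\,t^{N-1}}{(N-1)!}\cdot\frac{e^{zt}}{P(t)^{p+1}}$ collapses, using $\frac{N!}{(N-1)!}=N$, to exactly $\frac{pN}{t}\,G_{p+1}(t)$. Hence
\[
G_p'(t)=\frac{Np}{t}\,G_p(t)+(z-p)\,G_p(t)-\frac{pN}{t}\,G_{p+1}(t),
\]
and multiplying through by $\frac{t}{pN}$ and solving for $G_{p+1}(t)$ gives \eqref{eq:egf-recurrence}.

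There is no real obstacle; the only point requiring care is the bookkeeping of the factors $\frac{t^N}{N!}$ when recognizing the last term as $\frac{pN}{t}\,G_{p+1}$, together with the (trivial but essential) remark that the hypothesis on $z$ is precisely what identifies $\prod_{k=1}^{p+1}F_N(x_k,t)$ with $G_{p+1}$. If one prefers to avoid invoking \eqref{eq:partial-fraction-derivative}, the same relation follows by differentiating $G_p$ directly, using $\frac{d}{dt}P(t)^{-p}=-p\,P(t)^{-p-1}P'(t)$ and the formula $P'(t)=P(t)+\frac{t^{N-1}}{(N-1)!}$.
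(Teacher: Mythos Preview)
Your proof is correct and follows essentially the same approach as the paper: both arguments set $A=T_{N-1}(t)$, invoke the derivative identity \eqref{eq:partial-fraction-derivative} (equivalently, the relation $P'(t)=P(t)+\tfrac{t^{N-1}}{(N-1)!}$) to handle $\frac{d}{dt}\bigl[e^{zt}/P(t)^p\bigr]$, and combine this with the product rule applied to the factor $(t^N/N!)^p$. Your introduction of $G_p$ and the explicit observation that both products collapse to $G_p$ and $G_{p+1}$ through the hypothesis on $z$ is a clean way to organize the computation, but the mathematical content is identical to the paper's proof.
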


\begin{proof}
Set $A=T_{N-1}(t)$.  It follows from \eqref{eq:partial-fraction-derivative} that
\begin{align}
pN\prod_{k=1}^{p+1}F_N(x_k,t) 
& =-t\left(\frac{t^N}{N!}\right)^p p(A'-A)\frac{e^{yt}}{(e^t-A)^{p+1}} \notag \\
& = (z-p)t \frac{(t^N/N!)^p e^{yt}}{(e^t-A)^p} - t \left(\frac{t^N}{N!}\right)^p \frac{d}{dt}\left[\frac{e^{yt}}{(e^t-A)^p}\right] & \notag \\
&=  (z-p)t\prod_{k=1}^{p}F_N(y_k,t) + t \frac{d}{dt}\left[\left(\frac{t^N}{N!}\right)^p\right] \frac{e^{yt}}{(e^t-A)^p}  - t \frac{d}{dt} \left[\frac{(t^N/N!)^p e^{yt}}{(e^t-A)^p} \right] \notag \\
&=  (z-p)t\prod_{k=1}^{p}F_N(y_k,t) + pN\prod_{k=1}^{p}F_N(y_k,t) -t\frac{d}{dt}\left[\prod_{k=1}^{p}F_N(y_k,t)\right] \notag
\end{align}
which proves \eqref{eq:egf-recurrence} as desired.
\end{proof}

Following Dilcher \cite{dilcher} and Kamano \cite{kamano}, we define sums of products of hypergeometric Bernoulli polynomials of order $p$ by
\begin{equation}
S_{N,n}^{(p)}(\hat{x}_p)=\sum_{i_1+i_2+...+i_p=n} \frac{n!}{i_1! i_2!\cdots i_p!} B_{N,i_1}(x_1)B_{N,i_2}(x_2)\cdots B_{N,i_p}(x_p)
\end{equation}
where $\hat{x}_p=(x_1,x_2,...,x_p)$.  Moreover, we set $S_{N,n}^{(p)}(\hat{x}_p)=0$ for $n<0$.  Then observe that
\begin{equation} \label{eq:sums-of-product-LHS-expansion}
\prod_{k=1}^{p+1}F_N(x_k,t) = \sum_{n=0}^{\infty} S_{N,n}^{(p+1)}(\hat{x}_{p+1})\frac{t^n}{n!}
\end{equation}
On the other hand, we can expand the right-hand side of \eqref{eq:egf-recurrence} as follows by first defining
$$\hat{y}_p =(y_1,y_2,...,y_p)$$
so that $z=x_1+x_2+...+x_{p+1}=y_1+y_2+...+y_p$.  It follows that
\begin{flalign}
&\hspace{30pt} \left[\frac{(z-p)t}{pN}+1\right]\prod_{k=1}^{p}F_N(y_k,t)-\left(\frac{t}{pN}\right)\frac{d}{dt}\left[\prod_{k=1}^{p}F_N(y_k,t)\right]  & \notag \\
&  \hspace{60pt} = \left[\frac{(z-p)t}{pN}+1\right]  \sum_{n=0}^{\infty} \left[S_{N,n}^{(p)}(\hat{y}_p)\right]\frac{t^n}{n!} -\sum_{n=0}^{\infty}\left[\frac{n}{pN}S_{N,n}^{(p)}(\hat{y}_p)\right]\frac{t^n}{n!} & \notag \\
& \hspace{60pt} =\sum_{n=0}^{\infty} \left[\frac{(y-p)}{pN}S_{N,n}^{(p)}(\hat{y}_p)\right]\frac{t^{n+1}}{n!} + \sum_{n=0}^{\infty} \left[S_{N,n}^{(p)}(\hat{y}_p)\right]\frac{t^n}{n!} -\sum_{n=0}^{\infty} \left[\frac{n}{pN}S_{N,n}^{(p)}(\hat{y}_p)\right]\frac{t^n}{n!} & \notag \\
& \hspace{60pt} = \sum_{n=0}^{\infty} \left[\frac{n(y-p)}{pN}S_{N,n-1}^{(p)}(\hat{y}_p)+ S_{N,n}^{(p)}(\hat{y}_p)+ \frac{n}{pN}S_{N,n}^{(p)}(\hat{y}_p)\right]\frac{t^{n+1}}{n!} & \label{eq:sums-of-product-RHS-expansion}
\end{flalign}

By equating coefficients of \eqref{eq:sums-of-product-LHS-expansion} and \eqref{eq:sums-of-product-RHS-expansion} we obtain the following theorem, which establishes a recurrence for $S_{N,n}^{(p)}(\hat{x}_p)$ and generalizes Kamano's recurrence formula for sums of products of hypergeometric Bernoulli numbers (expressed in terms of a modified hypergeometric zeta function) given in \cite{kamano}  (Lemma 3.1).

\begin{theorem} \label{th:sum-product-recurrence} Let $\hat{x}_{p+1}=(x_1,x_2,...,x_{p+1})$ and $\hat{y}_p=(y_1,y_2,...,y_p)$ be such that
$$
z=x_1+x_2+...+x_{p+1}=y_1+y_2+...+y_p
$$
Then
\begin{equation} \label{eq:sum-product-recurrence}
S_{N,n}^{(p+1)}(\hat{x}_{p+1})=\frac{1}{pN}\left[(pN-n)S_{N,n}^{(p)}(\hat{y}_p)+n(z-p)S_{N,n-1}^{(p)}(\hat{y}_p)\right]
\end{equation}
\end{theorem}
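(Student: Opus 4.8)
The plan is to obtain \eqref{eq:sum-product-recurrence} by comparing the two power-series expansions of the generating-function identity \eqref{eq:egf-recurrence}, which has already been established: the expansion \eqref{eq:sums-of-product-LHS-expansion} of its left-hand side and the expansion \eqref{eq:sums-of-product-RHS-expansion} of its right-hand side, so that the proof reduces to equating the coefficients of $t^n/n!$. For the left-hand side I would note that $\prod_{k=1}^{p+1}F_N(x_k,t)$ is the Cauchy product of the series $F_N(x_k,t)=\sum_n B_{N,n}(x_k)t^n/n!$, and the coefficient of $t^n/n!$ in this product is precisely the multinomial sum defining $S_{N,n}^{(p+1)}(\hat x_{p+1})$; this is \eqref{eq:sums-of-product-LHS-expansion}.

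For the right-hand side, write $\Pi_p(t)=\prod_{k=1}^{p}F_N(y_k,t)=\sum_n S_{N,n}^{(p)}(\hat y_p)\,t^n/n!$. Since $t\frac{d}{dt}$ multiplies $t^n$ by $n$, one has $\frac{t}{pN}\Pi_p'(t)=\frac{1}{pN}\sum_n n\,S_{N,n}^{(p)}(\hat y_p)\,t^n/n!$, while the index shift $n\mapsto n-1$ gives $\frac{(z-p)t}{pN}\Pi_p(t)=\frac{z-p}{pN}\sum_n n\,S_{N,n-1}^{(p)}(\hat y_p)\,t^n/n!$, the convention $S_{N,-1}^{(p)}(\hat y_p)=0$ keeping the $n=0$ term correct since multiplication by $t$ contributes nothing at order $t^0$. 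Substituting these into $\left[\frac{(z-p)t}{pN}+1\right]\Pi_p(t)-\frac{t}{pN}\Pi_p'(t)$ and collecting terms yields \eqref{eq:sums-of-product-RHS-expansion}.

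Equating the coefficient of $t^n/n!$ in \eqref{eq:sums-of-product-LHS-expansion} and \eqref{eq:sums-of-product-RHS-expansion} then gives
\[
S_{N,n}^{(p+1)}(\hat x_{p+1})=\Bigl(1-\tfrac{n}{pN}\Bigr)S_{N,n}^{(p)}(\hat y_p)+\tfrac{n(z-p)}{pN}\,S_{N,n-1}^{(p)}(\hat y_p),
\]
and clearing the common denominator $pN$ in the first two terms is exactly \eqref{eq:sum-product-recurrence}. I do not expect a real obstacle: all the analytic content sits in the already-proved recurrence \eqref{eq:egf-recurrence}, and since $\prod_{k=1}^{p}F_N(y_k,t)=(t^N/N!)^p e^{zt}/(e^t-T_{N-1}(t))^p$ depends on $\hat y_p$ only through $z=y_1+\cdots+y_p$, both sides of \eqref{eq:sum-product-recurrence} are unambiguously defined once $z$ is fixed. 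The only care needed is the bookkeeping of the two index shifts together with the $n<0$ convention for $S_{N,n}^{(p)}$; the remainder is a mechanical comparison of coefficients.
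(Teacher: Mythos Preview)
Your proposal is correct and follows essentially the same route as the paper: the paper also derives \eqref{eq:sum-product-recurrence} by writing out the two expansions \eqref{eq:sums-of-product-LHS-expansion} and \eqref{eq:sums-of-product-RHS-expansion} of the already-established identity \eqref{eq:egf-recurrence} and equating coefficients of $t^n/n!$. Your bookkeeping of the index shifts and the use of the convention $S_{N,n}^{(p)}=0$ for $n<0$ are handled cleanly, and in fact your displayed coefficient computation is slightly tidier than the paper's intermediate expression.
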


As an example, we use Theorem \ref{th:sum-product-recurrence} to recursively generate formulas for $S_{N,n}^{(p)}(\hat{x}_p)$ for $p$ equals 2 and 3 in terms of $S_{N,n}^{(1)}(z)=B_{N,n}(z)$.  These formulas generalize those given by Dilcher \cite{dilcher} (equations (3.2) and (3.8)) for Bernoulli polynomials and by Kamano \cite{kamano} (conclusion section) for hypergeometric Bernoulli numbers.

\begin{example}
\begin{flalign}
\mathrm{(1)} \  S_{N,n}^{(2)}(\hat{x}_2) & =\frac{1}{N}\left[(N-n)B_{N,n}(z)+n(z-1)B_{N,n-1}(z)\right]  \\
\mathrm{(2)} \ S_{N,n}^{(3)}(\hat{x}_3)  & = \frac{1}{2N^2}[(N-n)(2N-n)B_{N,n}(z) +((2N-n)n(z-1)+n(z-2)(N-n+1))B_{N,n-1}(z)  \notag \\
& \ \ \ \ \ \ \ \ \ \ +n(n-1)(z-1)(z-2)B_{N,n-2}(z)]
\end{flalign}
\end{example}

To obtain a general formula for $S_{N,n}^{(p)}(\hat{x}_p)$ as a linear combination of hypergeometric Bernoulli polynomials, we take advantage of a theorem proven by the first author in \cite{nguyen} regarding two-dimensional sequences satisfying recurrences of the type similar to \eqref{eq:sum-product-recurrence}.  
Towards this end, suppose a two-dimensional sequence $x(n,k)$ satisfies the generalized triangular recurrence
\begin{equation} \label{eq:triangular-recurrence}
\begin{array}{l} 
{x(n,k)=a(n,k)x(n-1,k)+b(n,k)x(n-1,k-1)} \\ {x(0,k)=f(k)} 
\end{array}
\end{equation}
where $a(n,k)$ and $b(n,k)$ are known two-dimensional sequences, $k$ is an integer, and $n$ is a non-negative integer.  Next, denote by $A_{n}(m)$ to be the set of $m$-element subsets of $A=\{1,2,...,n\}$  and let $\sigma =\{ i_{1} ,i_{2} ,...,i_{m} \} \in A_{n} (m)$ be such a subset.  We define the \textit{rank} of a positive integer $j$ relative to $\sigma $, and denote it by $R_{\sigma } (j)$, to be the number of elements in $\sigma $ that are greater than $j$, i.e.
\[R_{\sigma } (j)=\left|\{ i\in \sigma :i>j\} \right|\]
Then define the product
\begin{equation} \label{eq:recurrrence-product}
\pi_{a,b}(\bar{\sigma},\sigma;k)=\prod_{s=1}^{n-m}a(j_s,k-R_{\sigma}(j_s)) \prod_{r=1}^{m}b(i_r,k-R_{\sigma}(i_r))
\end{equation}
where $\bar{\sigma}=\{j_1,j_2,...,j_{n-m}\}$ denotes the complement of $\sigma$ in $A$.  The following theorem gives a general formula for $x(n,k)$ in terms of $f(k)$.

\begin{theorem}[\cite{nguyen}, Theorem 4]  \label{th:two-dimensional-sequence}
\begin{equation} \label{eq:two-dimensional-sequence} 
x(n,k)=\sum _{m=0}^{n}\left(\sum _{\sigma \in A_{n} (m)}\pi_{a,b}(\bar{\sigma },\sigma ;k) \right)f(k-m).
\end{equation} 
\end{theorem}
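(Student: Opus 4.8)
The plan is to prove \eqref{eq:two-dimensional-sequence} by induction on $n$, reading the unrolled recurrence \eqref{eq:triangular-recurrence} as a weighted sum over the $2^n$ ways of choosing, at each of the $n$ steps, either the ``$a$-branch'' (which keeps the second index) or the ``$b$-branch'' (which lowers it by one); the set $\sigma\in A_n(m)$ records which $m$ steps use the $b$-branch, and $\pi_{a,b}(\bar{\sigma},\sigma;k)$ from \eqref{eq:recurrrence-product} is exactly the product of weights along that choice, with $f(k-m)$ the resulting boundary value. The base case $n=0$ is immediate: $A_0(0)=\{\emptyset\}$, both products in $\pi_{a,b}(\bar{\sigma},\sigma;k)$ are empty and hence equal $1$, and the right-hand side of \eqref{eq:two-dimensional-sequence} collapses to $f(k)=x(0,k)$.

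For the inductive step, assume \eqref{eq:two-dimensional-sequence} holds with $n$ replaced by $n-1$ (ranks being taken relative to $\tau\subseteq\{1,\dots,n-1\}$). Substituting into \eqref{eq:triangular-recurrence} gives
\begin{align*}
x(n,k) &= a(n,k)\sum_{m=0}^{n-1}\sum_{\tau\in A_{n-1}(m)}\pi_{a,b}(\bar{\tau},\tau;k)\,f(k-m) \\
&\quad + b(n,k)\sum_{m=0}^{n-1}\sum_{\tau\in A_{n-1}(m)}\pi_{a,b}(\bar{\tau},\tau;k-1)\,f(k-1-m).
\end{align*}
I then split the target sum $\sum_{m}\sum_{\sigma\in A_n(m)}\pi_{a,b}(\bar{\sigma},\sigma;k)\,f(k-m)$ according to whether $n\notin\sigma$ or $n\in\sigma$, and match the two pieces against the two sums above. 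If $n\notin\sigma$, I write $\sigma=\tau$ with $\tau\subseteq\{1,\dots,n-1\}$; then $n\in\bar{\sigma}$, $R_\sigma(n)=0$, and $R_\sigma(j)=R_\tau(j)$ for every $j\le n-1$, so $\pi_{a,b}(\bar{\sigma},\sigma;k)=a(n,k)\,\pi_{a,b}(\bar{\tau},\tau;k)$. If $n\in\sigma$, I write $\sigma=\tau\cup\{n\}$ with $|\tau|=|\sigma|-1$; the factor from $n$ is $b(n,k-R_\sigma(n))=b(n,k)$, while $R_\sigma(j)=R_\tau(j)+1$ for $j\le n-1$, hence $k-R_\sigma(j)=(k-1)-R_\tau(j)$, giving $\pi_{a,b}(\bar{\sigma},\sigma;k)=b(n,k)\,\pi_{a,b}(\bar{\tau},\tau;k-1)$ together with $f(k-|\sigma|)=f((k-1)-|\tau|)$.

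Summing these two identities over $\tau\in A_{n-1}(m)$ and over $m$, and re-indexing $m\mapsto m-1$ in the $n\in\sigma$ case, recovers precisely the displayed two-sum expression for $x(n,k)$; this establishes \eqref{eq:two-dimensional-sequence} for $n$ and closes the induction. The degenerate ranges cause no difficulty, since $A_{n-1}(n)=\emptyset$ and the $m=0$ term automatically forces $n\notin\sigma$. The main obstacle, and the one point requiring care, is exactly the rank bookkeeping in the case split: one must verify that removing the largest index $n$ from $\sigma$ leaves the ranks of the smaller indices unchanged when $n\notin\sigma$, and lowers each of them by one when $n\in\sigma$ — the latter decrement being precisely compensated by the shift from $k$ to $k-1$ forced by the $b$-branch. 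Everything else is routine reindexing.
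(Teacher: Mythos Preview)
The paper does not actually prove this theorem: it is quoted from \cite{nguyen} (Theorem~4 there) and used as a black box to derive \eqref{eq:sums-of-product-direct-formiula}. So there is no in-paper argument to compare against.

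That said, your induction is correct and is the natural proof. The base case is as you wrote. In the inductive step the only nontrivial check is the rank bookkeeping, and you have it right: when $n\notin\sigma$, removing $n$ from the ambient set $\{1,\dots,n\}$ peels off the factor $a(n,k-R_\sigma(n))=a(n,k)$ and leaves all other ranks unchanged; when $n\in\sigma$, peeling off $b(n,k-R_\sigma(n))=b(n,k)$ decreases every remaining rank by one, which is exactly absorbed by the shift $k\mapsto k-1$ coming from the $b$-branch of the recurrence. The bijection $\sigma\leftrightarrow(\tau,\ \text{membership of }n)$ between $A_n(m)$ and $A_{n-1}(m)\sqcup A_{n-1}(m-1)$ then matches the two pieces term by term with the inductive expressions for $x(n-1,k)$ and $x(n-1,k-1)$. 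The boundary cases $m=0$ and $m=n$ behave as you note. Nothing is missing.
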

We now apply Theorem \eqref{th:two-dimensional-sequence} to obtain our desired formula for $S_{N,n}^{(p)}(\hat{x}_p)$.
\begin{theorem} Let $a(p,n)=1-\frac{n}{pN} $, $b(p,n)=\frac{n}{N}\left(\frac{z}{p} -1\right)$, and $f(n)=B_{N,n}(z)$.  Define $\pi_{a,b}(\bar{\sigma },\sigma ;n)$ as in \eqref{eq:recurrrence-product}.  Then
\begin{equation} \label{eq:sums-of-product-direct-formiula} 
S_{N,n}^{(p)} (\hat{x}_p)=\sum _{k=0}^{p-1}\left(\sum _{\sigma \in A_{p-1} (k)}\pi_{a,b}(\bar{\sigma },\sigma ;n) \right)B_{N,n-k} ( z) .
\end{equation} 
\end{theorem}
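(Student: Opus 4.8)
The plan is to recognize the recurrence \eqref{eq:sum-product-recurrence} of Theorem \ref{th:sum-product-recurrence} as a special instance of the generalized triangular recurrence \eqref{eq:triangular-recurrence} and then invoke Theorem \ref{th:two-dimensional-sequence} directly. The only genuine work is bookkeeping: deciding which of the two indices plays the role of the ``iteration'' variable $n$ in \eqref{eq:triangular-recurrence} and which plays the role of the ``shift'' variable $k$, and reindexing the order parameter by one step so that the coefficients come out exactly as the $a(p,n)$ and $b(p,n)$ named in the statement.

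First I would note that $S_{N,n}^{(q)}(\hat{x}_q)$ depends on its arguments only through their sum $z$, so for each $m\geq 0$ we may unambiguously set
\[
x(m,n):=S_{N,n}^{(m+1)}(\hat{x}_{m+1}),
\]
where $\hat{x}_{m+1}$ is any $(m+1)$-tuple summing to $z$. Then $x(0,n)=S_{N,n}^{(1)}=B_{N,n}(z)$, so the base case $x(0,n)=f(n)$ of \eqref{eq:triangular-recurrence} holds with $f(n)=B_{N,n}(z)$, and the convention $S_{N,n}^{(p)}=0$ for $n<0$ is consistent with $B_{N,n}(z)=0$ for $n<0$. Applying Theorem \ref{th:sum-product-recurrence} with its order parameter $p$ set equal to $m$ gives, for $m\geq 1$,
\[
x(m,n)=\frac{1}{mN}\bigl[(mN-n)\,x(m-1,n)+n(z-m)\,x(m-1,n-1)\bigr]
= a(m,n)\,x(m-1,n)+b(m,n)\,x(m-1,n-1),
\]
with $a(m,n)=1-\frac{n}{mN}$ and $b(m,n)=\frac{n}{N}\bigl(\frac{z}{m}-1\bigr)$ — precisely the $a$ and $b$ of the theorem, with the role of ``$p$'' played by the first slot $m$. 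Thus the abstract sequence $x(n,k)$ of \eqref{eq:triangular-recurrence} is matched to our $x(m,n)$ by identifying its recursion index ``$n$'' with our order parameter $m$ and its shift index ``$k$'' with our Bernoulli index $n$; the product \eqref{eq:recurrrence-product} is then read with its first arguments ranging over subsets of $\{1,\dots,m\}$ and its second argument equal to $n$, in agreement with the notation $a(p,n),b(p,n)$.

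With this identification, Theorem \ref{th:two-dimensional-sequence} applied at first slot $m=p-1$ yields
\[
S_{N,n}^{(p)}(\hat{x}_p)=x(p-1,n)=\sum_{k=0}^{p-1}\left(\sum_{\sigma\in A_{p-1}(k)}\pi_{a,b}(\bar{\sigma},\sigma;n)\right)f(n-k)
=\sum_{k=0}^{p-1}\left(\sum_{\sigma\in A_{p-1}(k)}\pi_{a,b}(\bar{\sigma},\sigma;n)\right)B_{N,n-k}(z),
\]
which is exactly \eqref{eq:sums-of-product-direct-formiula}.

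The main obstacle is conceptual rather than computational: one must resist treating $n$ (the Bernoulli index) as the recursion variable of \eqref{eq:triangular-recurrence}, since \eqref{eq:sum-product-recurrence} steps \emph{down in the order} while $n$ merely shifts; getting this matching right, together with the one-step shift of defining $x(m,n)=S_{N,n}^{(m+1)}$ rather than $S_{N,n}^{(m)}$ so the coefficients land on $a(m,n),b(m,n)$ instead of $a(m-1,n),b(m-1,n)$, is the delicate point. As a sanity check, $p=1$ returns $S_{N,n}^{(1)}=B_{N,n}(z)$ (empty product equal to $1$), and $p=2$ returns $\frac{1}{N}\bigl[(N-n)B_{N,n}(z)+n(z-1)B_{N,n-1}(z)\bigr]$, in agreement with the Example above.
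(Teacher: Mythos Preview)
Your proposal is correct and takes essentially the same approach as the paper: set $x(m,n)=S_{N,n}^{(m+1)}$, recognize \eqref{eq:sum-product-recurrence} as an instance of \eqref{eq:triangular-recurrence} with the given $a,b,f$, and read off \eqref{eq:sums-of-product-direct-formiula} from Theorem \ref{th:two-dimensional-sequence} at $m=p-1$. Your write-up is in fact more careful than the paper's one-line proof, making explicit the dependence only on $z$, the identification of recursion versus shift variable, and the index shift from $S^{(m)}$ to $S^{(m+1)}$.
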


\begin{proof}
Set $x(p,n)=S_{N,n}^{(p+1)} (\hat{x}_p)$.  Since $x(n,p)$ satisfies the recurrence \eqref{eq:triangular-recurrence}, equation \eqref{eq:sums-of-product-direct-formiula} follows immediately from Theorem \ref{th:two-dimensional-sequence}.
\end{proof}

\section{Conclusion}
In this paper we established new convolution identities for hypergeometric Bernoulli polynomials by considering two different, but equivalent, definitions of Appell sequences.  Some of these identities generalize those found by Euler and Dilcher \cite{dilcher} for classical Bernoulli numbers and polynomials and by Kamano \cite{kamano} for hypergeometric Bernoulli numbers. 

We end by noting that the expansion formulas in Lemma \ref{le:partial-fraction} can be generalized to contain more than two factors, e.g.,
\begin{equation}\label{eq:partial-fraction-3factors}
\frac{(A-B)(A-C)(B-C)}{(e^t-A)(e^t-B)(e^t-C)} = \frac{B-C}{e^t-A} - \frac{A-C}{e^t-B} + \frac{A-B}{e^t-C}
\end{equation}
where $A$, $B$, and $C$ are three different quantities.  It is then natural to use \eqref{eq:partial-fraction-3factors} to derive higher-order identities, thereby generalizing Theorem \ref{th:HBP-egf}.

\

\noindent {\em Acknowlegements.} The authors would like to thank Hunduma Legesse Geleta (Addis Ababa University, Ethiopia) for his helpful comments and corrections.

\end{document}